\documentclass[11pt,a4paper]{article}
\usepackage[utf8]{inputenc}
\usepackage[T1]{fontenc}
\usepackage{amsmath}
\usepackage{amsfonts}
\usepackage{amssymb}
\usepackage{graphicx}
\usepackage{hyperref}

\usepackage{algorithm}
\usepackage{algpseudocode}

\usepackage{amsfonts}
\usepackage{amssymb}
\usepackage{amsthm}
\usepackage{amsmath}
\usepackage{mathrsfs}

\usepackage{bbm}
\usepackage{enumitem}
\usepackage{gensymb}

\usepackage[left=3.00cm, right=3.00cm, top=3.00cm, bottom=2.50cm]{geometry}

\usepackage[english]{babel}
\usepackage{ulem}

\newtheorem{thm}{Theorem}[section]
\newtheorem{cor}[thm]{Corollary}
\newtheorem{lemma}[thm]{Lemma}

\theoremstyle{definition}

\newcommand{\N}{\mathbb{N}}
\newcommand{\R}{\mathbb{R}}

\newcommand{\E}{\mathbb{E}}
\newcommand{\X}{\cX}
\newcommand{\one}{\mathbbm{1}}

\newcommand{\cD}{\ensuremath{\mathcal{D}}}

\newcommand{\cN}{\ensuremath{\mathcal{N}}}

\newcommand{\cX}{\ensuremath{\mathcal{X}}}

\newcommand{\sD}{\ensuremath{\mathscr{D}}}
\newcommand{\sN}{\ensuremath{\mathscr{N}}}

\newcommand{\Dl}{\ensuremath{\Delta}}

\newcommand{\gm}{\ensuremath{\gamma}} 
\newcommand{\al}{\ensuremath{\alpha}} 
\newcommand{\bt}{\ensuremath{\beta}}  

\newcommand{\sg}{\ensuremath{\sigma}}

\newcommand{\lt}{\ensuremath{\left}}
\newcommand{\rt}{\ensuremath{\right}}

\newcommand{\charD}{\xi}

 
\usepackage{color}
\definecolor{darkgreen}{rgb}{0.0,0.6,0}
\newcommand{\mysout}[1]{}
\newcommand{\Change}[1]{{#1}}
\newcommand{\change}[1]{{#1}}

\title{Asymptotic limits of transient patterns in a continuous-space interacting particle system.\footnote{Keywords: interacting particle system, voter model, Fleming-Viot super-process, pattern formation, actin turnover, stepwise mutation model}}
\author{Cecilia Gonz\'alez-Tokman and Dietmar B. Oelz\\
\scriptsize{School of Mathematics and Physics,
The University of Queensland, Brisbane QLD 4072 Australia}
}

\begin{document}
\maketitle
\begin{abstract}
We study a discrete-time interacting particle system with continuous state space which is motivated by a mathematical model for turnover through branching in actin filament networks. It gives rise to transient clusters reminiscent of actin filament assemblies in the cortex of living cells.
We reformulate the process in terms of the inter-particle distances and characterise their marginal and joint distributions. We construct a recurrence relation for the associated characteristic functions and pass to the large population limit, reminiscent of the Fleming-Viot super-processes. The precise characterisation of all marginal distributions established in this work opens the way to a detailed analysis of cluster dynamics. We also obtain a recurrence relation which enables us to compute the moments of the asymptotic single particle distribution characterising the transient aggregates. Our results indicate that aggregates have a fat-tailed distribution.
\end{abstract}

\section{Introduction}
We consider an interacting particle system \cite{Liggett1999} on the real line which can be regarded as a toy model for the random turnover of actin filaments in biological cells \cite{Tam2021}. The underlying modelling assumption is that through ARP-2/3 dependent branching~\cite{Gautreau2022} new actin filaments are positioned into the vicinity of existing actin filaments and that for every newly branched filament another one disassociates (Figure~\ref{fig_actin_filaments}A). 

\begin{figure}
	\centering
	\includegraphics[width=0.9\linewidth]{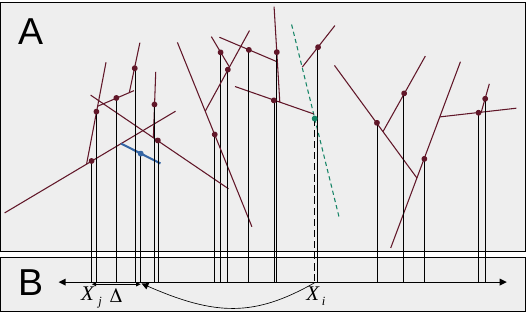}
	\caption{A: Sketch of an actin filament network in which filaments emerge by branching at -- approximately -- the characteristic $70\degree$ angle from existing actin filaments~\cite{Gautreau2022}. B: The projection of filament centre points onto a coordinate axis can be roughly described by a point process by which the simultaneous rapid disassociation of one filament (green-dashed) and branching of another filament (blue-bold) is interpreted as repositioning particle $i$ to a position at distance $\Delta$ from particle $j$.} 
 \label{fig_actin_filaments}
\end{figure}

In the context of the toy model, we idealise actin filaments as point objects, \Change{e.g. by projecting filament centre points onto a coordinate axis (Figure~\ref{fig_actin_filaments}B)}. \Change{We} ignore any dynamics other than disassociation and branching events occurring at regular intervals of time \Change{and model the spatial distance between centre points of nucleated filaments and parental filaments along the axis of projection by a given symmetric distribution of distances}. This leads us to consider the following discrete time Markov process with state space $\R^N$ for the positions of \Change{$N\geq2$} interacting particles, $X_k^n \in \R$, $k \in \{1, \ldots, N\}$ \Change{at time $n\in \mathbb N_0$. The initial positions $X_k^0$ are independent and identically distributed.
Assuming the process has been defined up to time $n$, at time $n+1$, an ordered pair of distinct indices $i_{n+1}, j_{n+1} \in \{1, \dots, N\}$ is selected with uniform probability on the set of allowed pairs, and independently of all previous choices. Also, a random offset $\Delta_{n+1}\in \mathbb R$ is selected, independently of all previous choices, and distributed according to a fixed (time-independent), symmetric\footnote{Symmetry of this distribution is assumed to simplify the presentation, but the arguments below could also be adapted to asymmetric distributions.} centred distribution with finite variance $\sigma^2>0$. $X_k^{n+1} \in \R$ is defined as follows:}
\begin{equation}
\label{equ_process1}
	X_k^{n+1}=
	\begin{cases}
		X_k^n  & k \neq i_{n+1} \; , \\
		X_{j_{n+1}}^n + \Delta_{n+1}  & k = i_{n+1} \;,
	\end{cases}
\end{equation}	
\mysout{where discrete time is denoted by $n=1,2, \ldots$ and the initial positions $X_k^0$ are identically distributed.} The process models the jump of particle $i_{n+1}$ into the vicinity of particle $j_{n+1}$. \mysout{where both indices are sampled from the uniform distribution on the set $\{1, ..., N\}$ under the constraint that $i_{n+1} \neq j_{n+1}$} The new particle position deviates from the position of particle $j_{n+1}$ by the random offset $\Delta_{n+1}$.
\mysout{ which are assumed to be iid according to a given centred uni-variate distribution with variance $\sg^2$.}
This can be interpreted as the simultaneous disassociation of one particle and creation of another particle.

\begin{figure}
	\centering
	\includegraphics[width=0.9\linewidth]{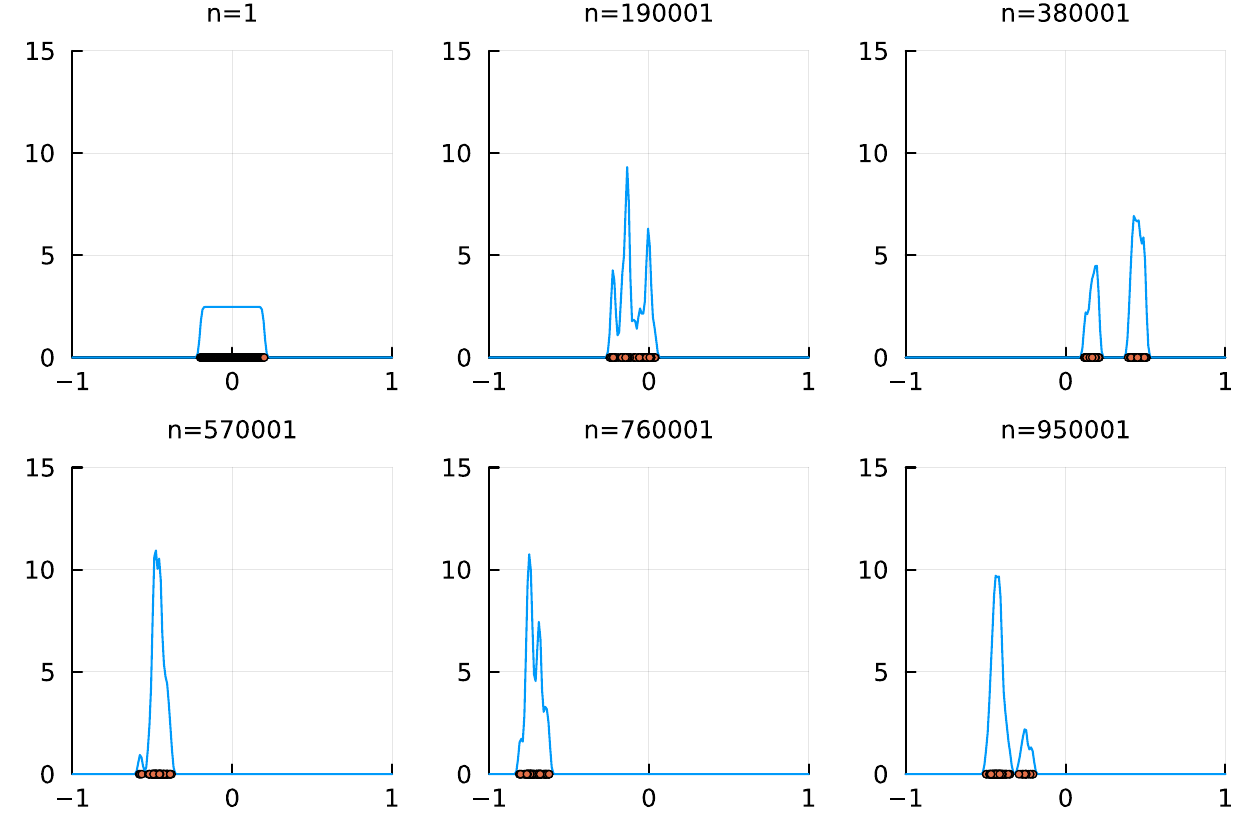}
	\caption{Stochastic simulation of particle positions $X_1^n$,\dots,$X_{100}^n$ for $\sigma=0.1$: Scatter plot and kernel density estimator (Gaussian kernel with variance $0.01^2$).} 
 \label{fig:sequenceofframes}
\end{figure}

Pattern formation is the most intriguing feature of the solutions to this process, namely the emergence of transient aggregates (see Figure~\ref{fig:sequenceofframes}). In the context of the cellular actin actin filament network, it can be speculated that these aggregates may stabilise higher order cytoskeleton assemblies such as stress fibres \cite{Lappalainen2012}, or even favour their spontaneous assembly, for example of ventral stress fibres \cite{Lappalainen2021}. \mysout{Note that other mechanisms involved in patterning the actin cytoskeleton involve mechanical forces exerted through molecular motor proteins.} 

\Change{Filament turnover is indeed a characteristic feature of cytoskeletal networks~\cite{Salbreux2016}. Patterns are regarded to be a combined effect of turnover and dynamics due to mechanical forces~\cite{SALBREUX2012} exerted actively by motor proteins and passively by cross-linker proteins~\cite{THORESEN2011}. The biased turnover represented by the toy model discussed in this study can be interpreted as a coarse grained description of local mechanisms of turnover which involve severing and branching as observed in keratocyte fragments~\cite{Gautreau2022,RAZBENAROUSH2017}. Note that local turnover typically favours the emergence of aggregates in contrast to spatially uniform turnover which counteracts pattern formation~\cite{Tam2021}.}

\Change{Broadly speaking the process can be interpreted as birth and death process by which in every step $n \rightarrow n+1$ the particle $X_{i_{n+1}}$ is removed and replaced by a new particle at $X_{j_{n+1}} + \Delta_{n+1}$.}
Alternatively, the component $X_k^n \in \mathbb{R}$ can be interpreted as the opinion of individual $k$ at time $n$ within the entire bandwidth of potential opinions represented by the set of real numbers. As such, this model is reminiscent of the classical voter model \cite{Liggett1999} and describes the random dynamic of a collective set of opinions $X_k^n$ for $k=1,...,N$. 
\Change{Furthermore it also has features of the classical Moran process~\cite{moran_1958}. As such it could be interpreted as replication-mutation dynamics of individuals which are each characterised by a single allele from the set of available alleles represented by $\mathbb{R}$. In this context the random variable $\Delta_{n+1}$ represents the mutation of the allele of individual $X^{n}_{i_{n+1}}$.}
\Change{This is reminiscent of the step-wise mutation model~\cite{DawsonHochberg1982}  as formulated in \cite{FlemingViot1979} where the notation of the classical Ohta-Kimura step-wise mutation model in population genetics \cite{OhtaKimura1973} was extended to non-discrete sets of possible types of individuals.}
\Change{We remark that the process we consider involves stochastic resetting of inter-particle distances to $\Delta_{n+1}$; \Change{we refer the reader to \cite{NagarGupta_2023} for a recent review on this subject.} We therefore expect that this process exhibits non-equilibrium steady states (NESS) \cite{EvansMajumdarPRL2011}, although we do not pursue this direction in this work.}

In this study we are interested in investigating cluster formation and specifically in describing the stationary distribution of relative positions \Change{and opinions, respectively, as well as} inter-particle distances.
We remark that an understanding of the joint distribution of inter-particle distances translates into direct information about cluster formation, as a cluster can be characterised by small inter-particle distances for elements within the cluster, and large inter-particle distance between elements in different clusters (or outside the clusters).

We consider the case of finitely many ($N$) particles, as well as the limit as $N\to\infty$.
Our approach is similar to Kingman's analysis \cite{Kingman76} of the step-wise mutation model, focusing on the particles' relative configurations -- without centering, the process has no stationary probability distribution -- and relying on characteristic functions.
Kingman noticed the crucial significance of the limiting measure as $N\to \infty$, and the fact that its properties  ``\textit{are unfortunately not easy to calculate''.}
Such a limit is related to Fleming-Viot superprocesses, 
a topic which has received significant attention in the literature, see e.g. \cite{Etheridge,bookDawsonGreven2014,Ferrari2007,BiswasEtheridgeKlimek2021,ChampagnatHass22} and reference therein.

We start by exploring the process \eqref{equ_process1} numerically and reformulating it in terms of inter-particle distances in section~\ref{particle distances}. Note that through this reformulation the process \eqref{equ_process1} is naturally symmetrised, which facilitates the analysis significantly. In section~\ref{S:1diff} we compute the time-asymptotic distribution of inter-particle distances, defined in \eqref{eq:diffs}, where we make use of the characteristic functions (Fourier transforms). \mysout{ which is the second major tool employed in our analysis.}

The \Change{centred} particle positions can then be recovered from the inter-particle distances, through \eqref{equ_Xreconstruct}.\Change{\footnote{Note that the center of mass of the original process \eqref{equ_process1} cannot be recovered from inter-particle distances.}}
\Change{To illustrate our approach, }
in section~\ref{sec_simple}  we consider small particle ensembles of size $N=2$ and $N=3$ and directly compute their time-asymptotic marginal distributions, making use of \mysout{the} our previous results on the distribution of inter-particle differences. 
In section~\ref{sec_N} we characterise the time-asymptotic marginal distribution of a single particle in the process \eqref{equ_process1} relative to the center of mass,
as well as the time-asymptotic joint distribution of inter-particle distances, 
based on recurrence relations for the associated characteristic functions. 
Finally, in section~\ref{equ_Nlimit} we pass to the limit of large population size ($N\to\infty$). 
The results of this section provide a way to compute the moments of the asymptotic particle distribution through a recurrence relation. In principle it allows one to compute all the moments of the large-population asymptotic single-particle distribution, although for practical reasons the numerical evaluation is only feasible for moments of order $\leq 60$ approximately.

\section{Preliminaries}
\label{particle distances}

For \Change{$N\geq 2$},
let us consider the renormalised $N$-particle process 
\begin{equation*}
\bar X_k^n=\frac{X_k^n-\langle X^n \rangle}{\sqrt{N}} \; ,
\end{equation*}
where $n\in \N$, $1 \leq k \leq N$, $X_k^n$ is as in \eqref{equ_process1} and $\langle X^n \rangle=\frac{1}{N} \sum_{k=1}^N X_k^n$ is the mean position at time $n$. Note that its time evolution is given by $\langle X^{n+1} \rangle=\langle X^n \rangle+\frac{1}{N} (X^n_{j_{n+1}}+ \Delta_{n+1}-X^n_{i_{n+1}})$. The renormalised process \Change{$\bar \X_N^n=(\bar X_1^n, \dots, \bar X_N^n)$} therefore satisfies
\begin{equation}
\label{eq:process}
\bar X_k^{n+1}=\begin{cases}
	\bar X_k^n + \frac{1}{N} \left( \bar X_{i_{n+1}}^n - \bar X_{j_{n+1}}^n - \frac{\Delta_{n+1}}{\sqrt{N}}  \right) & k \neq i_{n+1}  \; ,\\
	\bar X_{j_{n+1}}^n + \frac{\Delta_{n+1}}{\sqrt{N}}  + \frac{1}{N} \left( \bar X_{i_{n+1}}^n - \bar X_{j_{n+1}}^n - \frac{\Delta_{n+1}}{\sqrt{N}}  \right) & k = i_{n+1}  \; .
\end{cases}
\end{equation}	
Note that the renormalised process satisfies 
 \begin{equation}
 	\label{equ_sum}
	\bar X_1^n+\bar X_2^n+ \ldots + \bar X_N^n=0,
\end{equation}	
for all $n=1,2,...$. Taking the square of this identity and the expectation on both sides  \Change{yields
\begin{equation*}
	\Change{\sum_{k=1}^N\sum_{\substack{l=1\\ l\neq k}}^{N}\mathbb{E}[\bar X_k^n \, \bar X_l^n]}=- \sum_{k=1}^N\mathbb{E}[(\bar X_k^n)^2]. 
\end{equation*}	
We note that, by induction, the process $\bar \X_N^n$ is exchangeable, i.e. symmetric with respect to permutation of particle labels. More precisely, if $(k_1, \dots, k_N)$ is a permutation of $(1,...,N)$, then the joint distribution of $(X^n_1, \dots, X^n_N)$ coincides with the joint distribution of  $(X^n_{k_1}, \dots, X^n_{k_N})$, for every $n\geq0$. This property gives rise to similar symmetries for $\bar \X_N^n=(\bar X^n_1, \dots, \bar X^n_N)$.
Thus, we conclude that for every $1\leq k, l \leq N$ with $k\neq l$,}

\begin{equation*}
	\mathbb{E}[\bar X_k^n \, \bar X_l^n]=- \frac{1}{(N-1)} \mathbb{E}[(\bar X_k^n)^2]. 
\end{equation*}	
In particular, the single particle positions $\bar X_k^n$ are not independent. 

As an alternative to working with the symmetrised process \eqref{eq:process} we introduce the process of inter-particle distances $D_{lk}^n=\bar X_l^n-\bar X_k^n$. They satisfy
\begin{equation}
\label{eq:diffs}
D_{lk}^{n+1}=\begin{cases}
	D_{lk}^{n} &  \Change{i_{n+1}\notin \{ k, l\} }  \; ,
	\\
	D_{j_{\change{n+1}}k}^{n}+ \frac{\Delta_{n+1}}{\sqrt{N}}   & l = i_{n+1}, k \neq j_{n+1}  \; ,
	\\
	D_{lj_{\change{n+1}}}^{n} - \frac{\Delta_{n+1}}{\sqrt{N}}   & k = i_{n+1}, l \neq j_{n+1} \; , 
	\\
	\frac{\Delta_{n+1}}{\sqrt{N}}   & l = i_{n+1}, k=j_{n+1} \; , 
	\\
	- \frac{\Delta_{n+1}}{\sqrt{N}}   & k = i_{n+1}, l=j_{n+1}  \; ,
\end{cases}
\end{equation}
which has a significantly simpler structure than \eqref{eq:process}. 
Note that the original process $\bar X_k^n$ can be recovered from the process of inter-particle distances. Indeed,
 identity \eqref{equ_sum} implies that $N \bar X_1^n=(\bar X_1^n-\bar X_2^n)+...+(\bar X_1^n-\bar X_N^n)$ implying that 
\begin{equation}
	\label{equ_Xreconstruct}
\bar X_1^n = \frac1N\sum_{j=2}^N D_{1j}^n \; ,
\end{equation}
and similar identities hold for $\bar X^n_k$, $k > 1$ \cite{asselah_ferrari_groisman_2011}. 

Our strategy in this paper is to work with the process $D_{ij}^n$, namely with the characteristic functions characterising its marginal distributions, and to finally reconstruct the law of single particles using \eqref{equ_Xreconstruct}.
\Change{We will also rely on the symmetries of the process $(D_{12}^n, \dots, D_{1N-1}^n)$ under permutation of indices,  which follow directly from the symmetries of $\bar \X_N^n$.}

\Change{Our arguments will repeatedly rely on L\'evy's continuity theorem, ensuring that convergence in distribution of the random variables (or random vectors) $Y_1, Y_2, \dots \in \mathbb R^k$, with laws $\mu_1, \mu_2, \dots$, to a random variable (or random vector) $Y_\infty \in \mathbb R^k$, with law $\mu_\infty$, is equivalent to pointwise convergence of their characteristic functions. 
More precisely, for every $n=1,2,\dots,\infty$, let $\phi_n(t):=\mathbb E(e^{it\cdot Y_n})$, $t\in\mathbb R^k$, be the characteristic function of $Y_n$. Then $Y_1, Y_2, \dots$ converge in distribution to $Y_\infty$ -- or equivalently, $\mu_1,\mu_2,\dots$ converge weakly to $\mu_\infty$ --
if and only if for every $t\in\mathbb R^k, \lim_{n\to\infty}\phi_n(t)=\phi_\infty(t)$.
See e.g. \cite[\S 26 and \S29]{Billingsley}.}

\paragraph{Notation.}
Throughout this work,  $\lambda_N$ and $\xi_N$ will denote the law and characteristic function of $\Change{\Delta_n}/\sqrt{N}$, and $\lambda=\lambda_1$,  $\xi=\xi_1$.
Also, 
$\nu_{N,n}$ and $\phi_{N,n}$ will denote the law and characteristic function of normalised particle positions, respectively -- e.g. $\bar X_1^{n}$. 
Similarly, $\mu_{N,n}$ and $\psi_{N,n}$ will be the law and characteristic function of inter-particle distances -- e.g. $D_{12}^{n}$. 
For $1\leq k < N$, $\mu_{N,n}^k$ and $\psi_{N,n}^k$ will be the law and characteristic function of $k$ inter-particle distances -- e.g. $(D_{12}^{n}, D_{13}^{n}, \dots, D_{1k+1}^{n})$.

\section{Asymptotic distribution of inter-particle distances}\label{S:1diff}
In this section we characterise the asymptotic distribution of inter-particle distances\mysout{ $\mu_N$}.
Exploiting the symmetry of the process with respect to permutation of particle labels, we analyse 
$D_{12}^{n} = \bar X_1^{n}-\bar X_2^{n}$, without loss of generality.

\begin{thm}[Marginal distribution of inter-particle distances]\label{lem:diffs} 
\Change{Let $N\geq2$}. Let $\bar \X_N^n=(\bar X_1^n, \dots, \bar X_N^n)$ be as in \eqref{eq:process} and  $D_{12}^{n} = \bar X_1^{n}-\bar X_2^{n}$. Let $\mu_{N,n}$ be the law of  $D_{12}^{n}$.
\Change{Then $\mu_{N,n}$ converges weakly as $n\to\infty$ to a measure $\mu_N$, whose}
characteristic function  is 
 \begin{equation}\label{eq:CFDiffsFormula}
 \psi_N(s)=\frac{ \charD(s/\sqrt{N})}{N-1 - (N-2) \charD(s/\sqrt{N})} \Change{, \quad s\in \mathbb R}.
 \end{equation}
\Change{In other words, $D_{12}^n$ converges in distribution to a random variable with characteristic function given by \eqref{eq:CFDiffsFormula}. Furthermore, if $N>2$},
\begin{equation}
\label{eq:nuDiffsFormula}
\mu_N =\frac{1}{N-2} \sum_{k=1}^\infty \lt(\frac{N-2}{N-1}\rt)^k  (\lambda_N)^{\ast k},
\end{equation}
where $\nu^{\ast k}:=\underbrace{\nu \ast \nu \ast\dots\ast \nu}_{\text{$k$ times}}$
\Change{is the $k$-fold convolution of $\nu$ with itself.}
 \end{thm}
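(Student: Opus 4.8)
The plan is to derive a scalar affine recurrence for the characteristic function $\psi_{N,n}(s)=\E\!\left[e^{isD_{12}^n}\right]$, solve it and pass to the limit via a contraction estimate and L\'evy's continuity theorem, and finally expand the resulting closed form as a geometric series to read off \eqref{eq:nuDiffsFormula}. Fix $s\in\R$. At step $n+1$ the triple $(i_{n+1},j_{n+1},\Delta_{n+1})$ is independent of the history up to time $n$, so I would condition on it and use the five cases of \eqref{eq:diffs} with $(l,k)=(1,2)$, partitioning the $N(N-1)$ ordered pairs of distinct indices accordingly: $(N-2)(N-1)$ pairs (those with $i_{n+1}\notin\{1,2\}$) leave $D_{12}^{n+1}=D_{12}^n$; the $N-2$ pairs with $i_{n+1}=1,\ j_{n+1}\in\{3,\dots,N\}$ give $D_{j_{n+1}2}^n+\Delta_{n+1}/\sqrt N$; the $N-2$ pairs with $i_{n+1}=2,\ j_{n+1}\in\{3,\dots,N\}$ give $D_{1j_{n+1}}^n-\Delta_{n+1}/\sqrt N$; and the single pairs $(1,2)$ and $(2,1)$ give $\pm\Delta_{n+1}/\sqrt N$. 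By exchangeability of $\bar\X_N^n$ we have $D_{j2}^n\overset{d}{=}D_{1j}^n\overset{d}{=}D_{12}^n$ for $j\notin\{1,2\}$, and by independence and symmetry of $\Delta_{n+1}$ the variable $\pm\Delta_{n+1}/\sqrt N$ has the real, even characteristic function $\eta:=\charD(s/\sqrt N)$. Taking expectations yields
\[
\psi_{N,n+1}(s)=a\,\psi_{N,n}(s)+b,\qquad a=\frac{(N-2)(N-1+2\eta)}{N(N-1)},\quad b=\frac{2\eta}{N(N-1)}.
\]

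\textbf{Step 2 (limit and characteristic function).} Since $|\eta|\le1$ for all $s$, we obtain the $s$-uniform bound $|a|\le\frac{(N-2)(N+1)}{N(N-1)}=1-\frac{2}{N(N-1)}<1$, so $x\mapsto ax+b$ is a contraction with unique fixed point $\psi_N(s)=b/(1-a)$; iterating gives $\psi_{N,n}(s)=a^n\psi_{N,0}(s)+(1-a^n)\psi_N(s)\to\psi_N(s)$ as $n\to\infty$ (the transient term vanishes because $|\psi_{N,0}(s)|\le1$), and the limit does not depend on the initial law. A direct computation gives $1-a=\frac{2\bigl[(N-1)-(N-2)\eta\bigr]}{N(N-1)}$, hence $\psi_N(s)=\dfrac{\eta}{(N-1)-(N-2)\eta}$, which is \eqref{eq:CFDiffsFormula}. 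Because $|(N-2)\eta|\le N-2<N-1$ the denominator never vanishes, so $\psi_N$ is continuous on $\R$ with $\psi_N(0)=1$; by L\'evy's continuity theorem $\mu_{N,n}$ converges weakly to a probability measure $\mu_N$ with characteristic function $\psi_N$. (For $N=2$ one has $a=0$, so $\psi_{2,n}\equiv\charD(s/\sqrt2)$ already for $n\ge1$ and the claim is immediate.)

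\textbf{Step 3 (the convolution series, $N>2$, and the main obstacle).} Write $\psi_N(s)=\dfrac{\eta}{N-1}\Bigl(1-\tfrac{N-2}{N-1}\eta\Bigr)^{-1}$. Since $\bigl|\tfrac{N-2}{N-1}\eta\bigr|\le\tfrac{N-2}{N-1}<1$, expand the geometric series and reindex by $k=m+1$:
\[
\psi_N(s)=\frac{1}{N-1}\sum_{m\ge0}\Bigl(\tfrac{N-2}{N-1}\Bigr)^m\eta^{m+1}=\frac{1}{N-2}\sum_{k\ge1}\Bigl(\tfrac{N-2}{N-1}\Bigr)^k\eta^k.
\]
The coefficients $c_k=\tfrac{1}{N-2}\bigl(\tfrac{N-2}{N-1}\bigr)^k$ are nonnegative with $\sum_{k\ge1}c_k=1$, and $\eta^k=\charD(s/\sqrt N)^k$ is the characteristic function of $(\lambda_N)^{\ast k}$; hence $\sum_{k\ge1}c_k(\lambda_N)^{\ast k}$ is a probability measure whose characteristic function is $\psi_N$ (interchanging sum and integral is justified by $\sum_k c_k<\infty$), and by uniqueness of characteristic functions it equals $\mu_N$, giving \eqref{eq:nuDiffsFormula}. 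There is no deep difficulty here: the one step to execute carefully is the bookkeeping in Step 1 --- correctly partitioning the $N(N-1)$ ordered pairs among the branches of \eqref{eq:diffs} and using exchangeability to collapse every $D_{j2}^n$ and $D_{1j}^n$ onto the single law $\mu_{N,n}$ --- together with the strict, $s$-uniform bound $|a|<1$, which is what makes the limit exist and be independent of the initial condition.
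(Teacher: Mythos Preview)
Your proof is correct and follows essentially the same approach as the paper: derive the affine recurrence for the characteristic function from the case analysis of \eqref{eq:diffs}, use the strict contraction $|a|<1$ to pass to the fixed point, apply L\'evy's continuity theorem, and expand as a geometric series. The only differences are notational (your $a,b$ versus the paper's $\alpha\charD_N+\gamma,\ \beta\charD_N$) and that you are a bit more explicit about the iteration formula and the justification of the series identity via uniqueness of characteristic functions.
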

\begin{proof}
Let  $D_{12}^{n}$ and $\mu_{N,n}$ be as in the statement.
It follows from \eqref{eq:diffs} that for every $n\in\N$,
\begin{equation}
	\label{eq:diffs12}
	D_{12}^{n+1}=\begin{cases}
	D_{j_{\change{n+1}}2}^{n}+\frac{\Delta_{n+1}}{\sqrt{N}}   &  1=i_{n+1}, 2 \neq j_{n+1}  \; , \\
		D_{1j_{\change{n+1}}}^{n} -\frac{\Delta_{n+1}}{\sqrt{N}}   &  2=i_{n+1}, 1 \neq j_{n+1} \; , \\
		\frac{\Delta_{n+1}}{\sqrt{N}}   & 1= i_{n+1}, 2=j_{n+1} \; , \\
		-\frac{\Delta_{n+1}}{\sqrt{N}}   & 2 = i_{n+1}, 1=j_{n+1}  \; ,\\
		D_{12}^{n} &  i_{n+1} \notin \{1,2\}  \;  . 
		\end{cases}
\end{equation}
Note that the indices $(i_{n+1}, j_{n+1})$ are uniformly distributed on a set of cardinality $N(N-1)$. For that reason, writing
\begin{equation}
\label{equ_abg}	
\al=\frac{2(N-2)}{N(N-1)} \;, \quad \bt= \frac{2}{N(N-1)} \quad \text{and} \quad \gm=\frac{N-2}{N} \; ,
\end{equation}
where $\alpha+\beta+\gm=1$, the first two cases in \eqref{eq:diffs12} occur each with probability $\alpha/2$, cases $3$ and $4$ occur each with probability $\beta/2$ and the last case occurs with probability $\gm$. 

Since $D_{kl}^{n}$ and $\Delta_{n+1}$ are independent for every $1\leq k,l \leq N$, and $\Dl_{n+1}$ is symmetric we have that for every $n\in\N$ the laws of $D^n_{12}$ at subsequent timesteps are coupled through
 \begin{equation}\label{eq:RecNun}
 \mu_{N,n+1}=\alpha \, \mu_{N,n} \ast \lambda_N + \beta \, \lambda_N + \gm \, \mu_{N,n} \; ,
 \end{equation}
where we recall that $\lambda_N$ is the law of $\Delta_{n+1}/\sqrt{N}$.
Also recall that $\psi_{N,n}(s) = \E(\exp(is D_{12}^{n}))$ is the characteristic function of $D_{12}^{n}$, i.e. the Fourier transform of $\mu_{N,n}$, and $\charD_{N}(s)=\charD(s/\sqrt{N})$ is the characteristic function of $\frac{\Delta}{\sqrt{N}}$. Fourier transformation of  \eqref{eq:RecNun} implies that 
$
\psi_{N,n+1}= (\al \charD_{N} +\gm) \psi_{N,n} + \bt \charD_{N}
$. For every $s \in \mathbb{R}$ this recurrence relation is a contraction since $| \psi_{N,n+1}(s)-\psi_{N,n}(s) | \leq C | \psi_{N,n}(s)-\psi_{N,n-1}(s) |$ where $C=\Change{\sup_{s\in\mathbb R}|\al \charD_{N}(s) +\gm|} \leq \al +\gm <1$ due to uniform boundedness of the characteristic function,
\Change{i.e. $\sup_{s\in\mathbb R} |\charD_{N}(s)|=1$}. 
This guarantees pointwise convergence of $\psi_{N, n}$ as $n \rightarrow \infty$ to the fixed point satisfying
\begin{equation*}
	\psi_N= (\al \charD_{N} +\gm) \psi_N + \bt \charD_{N} \; ,
\end{equation*}
which is equivalent to \eqref{eq:CFDiffsFormula} . 

Since $\charD_{N}$ is continuous, $\psi_N$ is continuous at $0$ and L\'evy's continuity theorem ensures that 
$\mu_{N,n}$ converges weakly, say to $\mu_N$, with characteristic function $\psi_N$.

We now show that \eqref{eq:nuDiffsFormula} holds \Change{when $N>2$}. Note that in this case \eqref{eq:CFDiffsFormula} can be written in terms of a geometric series, 
$\psi_N=\frac{1}{N-2} \sum_{k=1}^\infty \left(\frac{N-2}{N-1} \charD_{N} \right)^k$. The inverse Fourier transform of this identity implies \eqref{eq:nuDiffsFormula}. 
\end{proof}

As an application of Theorem~\ref{lem:diffs} in a context where the probability measures involved have a pdf, we have the following.
\begin{cor}\label{cor:normal}
In the setting of Theorem~\ref{lem:diffs}, \Change{if $N>2$ and}
$\Dl_n \sim \cN(0,\sg^2)$, then
$\mu_N$ is a probability measure with pdf
\[
\frac{1}{N-2} \sum_{k=1}^\infty \lt(\frac{N-2}{N-1}\rt)^k \frac{1}{\sg\sqrt{2\pi k/N}}  e^{\frac{-y^2}{2k\sg^2/N}}.
\]
\end{cor}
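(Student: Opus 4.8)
The plan is to start from the explicit series representation \eqref{eq:nuDiffsFormula} of $\mu_N$ established in Theorem~\ref{lem:diffs}, namely $\mu_N=\frac{1}{N-2}\sum_{k=1}^\infty\lt(\frac{N-2}{N-1}\rt)^k(\lambda_N)^{\ast k}$, and to evaluate each convolution power explicitly in the Gaussian case. First I would note that $\Dl_n\sim\cN(0,\sg^2)$ forces $\Dl_n/\sqrt N\sim\cN(0,\sg^2/N)$, so that $\lambda_N=\cN(0,\sg^2/N)$, which is absolutely continuous with respect to Lebesgue measure. Next I would invoke stability of the Gaussian family under convolution: the $k$-fold convolution of $\cN(0,\sg^2/N)$ with itself is $\cN(0,k\sg^2/N)$, whose density is $f_k(y)=\frac{1}{\sg\sqrt{2\pi k/N}}\,e^{-y^2/(2k\sg^2/N)}$, exactly the $k$-th summand appearing in the claimed pdf.

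The remaining step is to push the series inside the density. For any Borel set $A\subseteq\R$, \eqref{eq:nuDiffsFormula} gives $\mu_N(A)=\frac{1}{N-2}\sum_{k=1}^\infty\lt(\frac{N-2}{N-1}\rt)^k\int_A f_k(y)\,dy$; since all coefficients and integrands are non-negative, Tonelli's theorem permits interchanging the sum and the integral, yielding $\mu_N(A)=\int_A g(y)\,dy$ with $g(y)=\frac{1}{N-2}\sum_{k=1}^\infty\lt(\frac{N-2}{N-1}\rt)^k f_k(y)$. Hence $g$ is a density for $\mu_N$. As a consistency check one computes $\int_\R g=\frac{1}{N-2}\sum_{k\geq1}\lt(\frac{N-2}{N-1}\rt)^k=1$ by summing the geometric series, which is legitimate precisely because $N>2$ makes the ratio lie in $(0,1)$; this also re-confirms that $\mu_N$ is a probability measure.

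I do not anticipate a genuine obstacle here: the only points requiring care are the correct bookkeeping of the variance scaling $\sg^2\mapsto\sg^2/N\mapsto k\sg^2/N$ under the two successive operations (rescaling by $1/\sqrt N$, then $k$-fold convolution) and the appeal to Tonelli to justify summing the densities termwise. Alternatively, one could bypass \eqref{eq:nuDiffsFormula} and argue directly with characteristic functions: substituting $\charD(s)=e^{-\sg^2 s^2/2}$ into \eqref{eq:CFDiffsFormula} and expanding in a geometric series exhibits $\psi_N$ as the pointwise sum of the Fourier transforms of the $f_k$, weighted by the geometric coefficients; since $g\in L^1(\R)$ is non-negative with unit mass, Fourier uniqueness then identifies $g$ as the density of $\mu_N$. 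Either route is short.
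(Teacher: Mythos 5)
Your argument is correct and is essentially the paper's own proof: both identify $\lambda_N=\cN(0,\sigma^2/N)$, use Gaussian stability under convolution to read off $(\lambda_N)^{\ast k}=\cN(0,k\sigma^2/N)$, and substitute into the series \eqref{eq:nuDiffsFormula}. The additional Tonelli and normalisation checks you supply are sound but go beyond the brevity of the paper's one-line justification.
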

\begin{proof}
The statement follows directly from the fact that, in this case,
 $(\lambda_N)^{\ast k}$ is the law of a normal random variable with mean $0$ and variance $k\sg^2/N$.
\end{proof}

We can also identify the limit of the inter-particle distance distribution, in the infinite particle limit. This limit is universal, in that it does not depend on the distribution of $\Delta_n$ other than through its variance.  We include a proof which will be referred to in the sequel, but see e.g. \cite[Proposition 2.2.9]{KKP-Laplace} for a similar statement. This result is illustrated in Figure~\ref{fig:empiricaldistribdifferences}.

\begin{cor}[Limit distribution of inter-particle distances]\label{cor:Laplace}
In the setting of Theorem~\ref{lem:diffs},
the limit $\mu_\infty:=\lim_{N\to \infty} \mu_N$ exists, and it is the law of a Laplace distribution with mean 0 and 
variance $\sg^2$ (scale parameter $b=\tfrac{\sg}{\sqrt{2}}$). Hence,  
the characteristic function of $\mu_\infty$ is $\frac{2}{2+s^2\sg^2}$, and its 
 pdf  is 
$
\frac{1}{\sqrt{2}\sg}e^{-\sqrt{2}|y|/\sg}.$
\end{cor}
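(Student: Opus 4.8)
The plan is to pass to the limit $N\to\infty$ directly in the closed-form expression \eqref{eq:CFDiffsFormula} for the characteristic function $\psi_N$ supplied by Theorem~\ref{lem:diffs}, and then invoke L\'evy's continuity theorem. The only analytic input needed is the second-order Taylor expansion of $\charD$ at the origin: since $\Delta_n$ is centred with finite variance $\sg^2$, one has $\charD(u)=1-\tfrac{\sg^2}{2}u^2+o(u^2)$ as $u\to0$ (a standard consequence of a finite second moment; notably, no third moment is required). Substituting $u=s/\sqrt N$ gives, for each fixed $s\in\R$, $\charD(s/\sqrt N)=1-\tfrac{\sg^2 s^2}{2N}+o(1/N)$ as $N\to\infty$.

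Next I would insert this expansion into \eqref{eq:CFDiffsFormula}. The numerator tends to $1$. For the denominator, $N-1-(N-2)\charD(s/\sqrt N)=1+\tfrac{N-2}{N}\cdot\tfrac{\sg^2 s^2}{2}+o(1)\to 1+\tfrac{\sg^2 s^2}{2}$, which is bounded away from $0$; hence $\psi_N(s)\to\dfrac{1}{1+\sg^2 s^2/2}=\dfrac{2}{2+s^2\sg^2}$ for every $s\in\R$.

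The function $s\mapsto \tfrac{2}{2+s^2\sg^2}$ is continuous (in particular at $s=0$, where it equals $1$), so L\'evy's continuity theorem yields weak convergence of $\mu_N$ to a probability measure $\mu_\infty$ with this characteristic function. It then remains only to identify $\mu_\infty$: since $\tfrac{1}{1+b^2 s^2}$ is the characteristic function of the Laplace law with mean $0$ and scale parameter $b$, matching $b^2=\sg^2/2$ gives $b=\sg/\sqrt2$, whose variance is $2b^2=\sg^2$ and whose density is $\tfrac{1}{2b}e^{-|y|/b}=\tfrac{1}{\sqrt2\,\sg}e^{-\sqrt2|y|/\sg}$, as claimed.

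The argument is essentially a one-line limit computation, so there is no serious difficulty; the only point requiring a little care — the main (modest) obstacle — is justifying the $o(u^2)$ Taylor remainder for $\charD$ from the finite-variance hypothesis alone, and checking that the resulting error terms are genuinely $o(1)$ for each fixed $s$ (they are, since $s$ is held fixed while $N\to\infty$, so that $s/\sqrt N\to0$).
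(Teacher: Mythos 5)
Your proposal is correct and follows essentially the same path as the paper: expand $\charD(s/\sqrt N)=1-\tfrac{\sg^2 s^2}{2N}+o(1/N)$, substitute into the closed-form \eqref{eq:CFDiffsFormula}, pass to the limit to obtain $\tfrac{2}{2+s^2\sg^2}$, and identify this as the characteristic function of the Laplace law with scale $b=\sg/\sqrt2$. The only (harmless) cosmetic difference is that you spell out the appeal to L\'evy's continuity theorem and the justification of the $o(u^2)$ remainder from finite variance alone, which the paper leaves implicit.
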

\begin{proof}
Let $\charD_{N}$ the characteristic function of $\frac{\Delta_n}{\sqrt{N}}$.
Using properties of characteristic functions,  $\charD_{N}(s) =1- \frac{\sg^2s^2}{2N} + o(\frac{s^2}{N})$, as $N\to\infty$.
Substituting this into \eqref{eq:CFDiffsFormula} yields\Change{, for every $s\in \mathbb R$,}
\begin{equation}\label{eq:phiNlim}
	\psi_N(s)=\frac{1- \frac{\sg^2s^2}{2N} + o(\frac{s^2}{N})}{(N-1)- (N-2)(1- \frac{\sg^2s^2}{2N} + o(\frac{s^2}{N}))}=\frac{1- \frac{\sg^2s^2}{2N} + o(\frac{s^2}{N})}{1+ (N-2)(\frac{\sg^2s^2}{2N} + o(\frac{s^2}{N}))}.
\end{equation}

Hence,  $\psi_N(s) = \frac{2}{2+s^2\sg^2} + \mathcal O (\frac{s^2}{N})$ as $N\to\infty$. 
The statement follows from the fact that 
$\frac{2}{2+s^2\sg^2}$ is 
the characteristic function of a Laplace random variable with density $\exp(-|x|/b)/(2 b)$ and scale parameter $b=\sqrt{\sigma^2/2}$. 
\end{proof}

\begin{figure}
	\centering
	\includegraphics[width=0.5\linewidth]{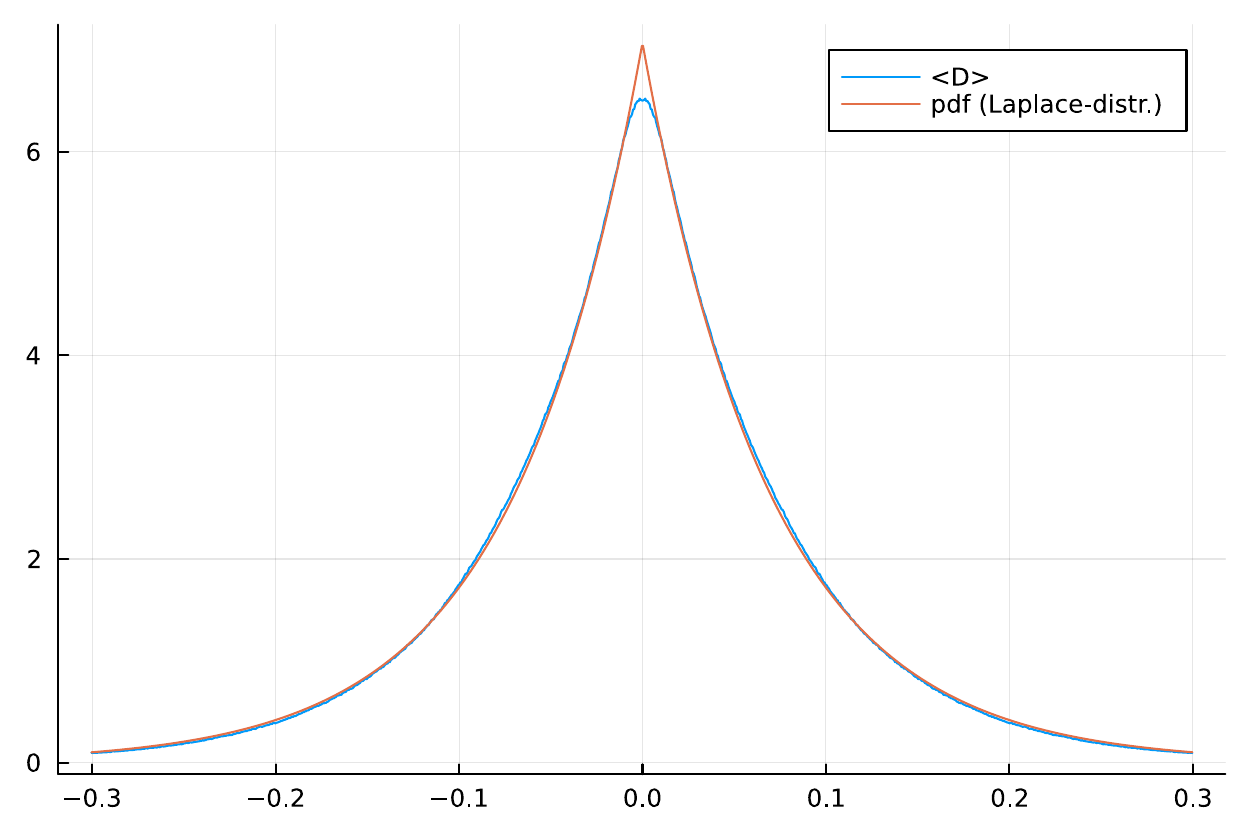}
	\caption{Time-averaged empirical marginal distribution of inter-particle distance $D_{ij}$ with $N=100$, $\sigma=0.1$ \change{and $\Delta_{n+1} \sim N(0, \sigma^2)$}, spatially renormalised by $1/\sqrt{N}$ (blue) vs Laplace distribution with scale parameter $b=\sqrt{\sigma^2/2}$ (red).}
	\label{fig:empiricaldistribdifferences}
\end{figure}

\section{Small particle ensembles}
\label{sec_simple}
In this section, we investigate the asymptotic distribution of the process $(\bar \X_N^n)_{n\in \N}$ for $N=2, 3$. 
Recall that $\lambda_N$ is the  law of $\frac{\Delta_n}{\sqrt{N}}$, and $\charD$ the characteristic function of $\Dl_n$.
\mysout{While the analysis of this section can easily be adapted to the case of asymmetric $\Dl_n$, for simplicity,} 
Throughout this section we will assume that  
$\lambda_N$ has a pdf denoted by $\rho_N^\Dl$.

\subsection{The simple case of two particles}\label{S:2part}
When $N=2$, the evolution of the first particle in the process \eqref{eq:process} reduces to 
\begin{equation*}
	\bar X_1^{n+1}=\begin{cases}
		 \frac{1}{2}  \frac{\Delta_{n+1}}{\sqrt{2}}   &  i_{n+1} = 1, \  j_{n+1}=2 \; , \\	
		 -\frac{1}{2}  \frac{\Delta_{n+1}}{\sqrt{2}}  & i_{n+1} = 2,    \ j_{n+1} = 1 \; .
	\end{cases}
\end{equation*}

 Thus, the 
  characteristic function of  $\bar X_1^{n+1}$ is $\phi_2(s)=\charD \bigl(\tfrac{s}{2\sqrt{2}}\bigr)$, independently of $n$.
Since $\bar X_2^{n}=- \bar X_1^{n}$ for every $n\in \N$, the asymptotic distribution of the process  $(\bar \X_2^n)_{n\in 
\N}$ is characterised by $\phi_2$. 
\Change{ Indeed, if $\phi_{2,n}^2: \mathbb R^2 \to \mathbb C$ is the characteristic function of $\bar \X_2^n$, then $\phi_{2,n}^2(s_1, s_2)= \mathbb E(e^{i(s_1\bar X_1^n + s_2 \bar X_2^n)})=\mathbb E(e^{i\bar X_1^n (s_1- s_2)})=\phi_2(s_1-s_2)$.}

\subsection{The case of three particles}
When $N=3$, the evolution of the first particle  in the process \eqref{eq:process} is given by

\begin{equation*}
	\label{eq:process3}
	\bar X_1^{n+1}=
	\begin{cases}
		 \frac{1}{3} \bar X_{1}^n +\frac{2}{3}\bar X_{2}^n + \frac{2}{3} \frac{\Delta_{n+1}}{\sqrt{3}}     & i_{n+1}=1, j_{n+1}=2 \; , \\
		\frac{1}{3} \bar X_{1}^n +\frac{2}{3}\bar X_{3}^n + \frac{2}{3} \frac{\Delta_{n+1}}{\sqrt{3}}  & i_{n+1}=1, j_{n+1}=3  \; ,\\
		\frac{2}{3} \bar X_{1}^n +\frac{1}{3}\bar X_{2}^n - \frac{1}{3} \frac{\Delta_{n+1}}{\sqrt{3}} & i_{n+1}=2, j_{n+1}=1   \; ,\\
		\frac{2}{3} \bar X_{1}^n +\frac{1}{3}\bar X_{3}^n - \frac{1}{3} \frac{\Delta_{n+1}}{\sqrt{3}}  &  i_{n+1}=3, j_{n+1}=1\; ,\\
		 \bar X_{1}^n+\frac{1}{3} \bar X_{2}^n -\frac{1}{3}\bar X_{3}^n - \frac{1}{3} \frac{\Delta_{n+1}}{\sqrt{3}}  & i_{n+1}=2, j_{n+1}=3\; ,\\
		\bar X_{1}^n-\frac{1}{3} \bar X_{2}^n +\frac{1}{3}\bar X_{3}^n - \frac{1}{3} \frac{\Delta_{n+1}}{\sqrt{3}}  & i_{n+1}=3, j_{n+1}=2 \; .
	\end{cases}
\end{equation*}	
Recalling that $\bar X_1^{n}+ \bar X_2^{n}+\bar X_3^{n}=0$ for all $n\in\N$ this can be symmetrised as follows,
\begin{equation*}
	\bar X_1^{n+1}=\begin{cases}
		 \frac{1}{3} \lt(\bar X_2^{n} - \bar X_3^{n} + 2\frac{\Delta_{n+1}}{\sqrt{3}} \rt)   & i_{n+1} = 1, \  j_{n+1}=2 \; , \\
		  \frac{1}{3} \lt(\bar X_3^{n} - \bar X_2^{n} + 2\frac{\Delta_{n+1}}{\sqrt{3}} \rt)   & i_{n+1} = 1, \  j_{n+1}=3 \; , \\
		   \frac{1}{3} \lt(\bar X_1^{n} - \bar X_3^{n} - \frac{\Delta_{n+1}}{\sqrt{3}} \rt)   & i_{n+1} = 2, \  j_{n+1}=1\; ,  \\
		 \frac{1}{3} \lt(\bar X_1^{n} - \bar X_2^{n} - \frac{\Delta_{n+1}}{\sqrt{3}} \rt)   & i_{n+1} = 3, \  j_{n+1}=1 \; , \\
	\frac{1}{3} \lt(2(\bar X_1^{n} - \bar X_3^{n}) - \frac{\Delta_{n+1}}{\sqrt{3}} \rt)   & i_{n+1} = 2, \  j_{n+1}=3 \; , \\
		  \frac{1}{3} \lt(2(\bar X_1^{n} - \bar X_2^{n}) - \frac{\Delta_{n+1}}{\sqrt{3}} \rt)   & i_{n+1} = 3, \  j_{n+1}=2 \; .
	\end{cases}
\end{equation*}
This implies that the particle positions can be computed in terms of the inter-particle distances.
To illustrate how this is done, we focus our attention in the case where the distributions involved have densities\footnote{In \S\ref{sec_N}, we present a general argument valid for all probability measures.}.

\begin{lemma}
\Change{Assume $\lambda_3$ has a density with respect to Lebesgue measure, $\frac{d \lambda_3}{d\ell}=\rho_3^\Dl$}. Then, $\nu_3$ has a density with respect to Lebesgue, given by $\frac{d \nu_3}{d\ell} = \rho^X$, where 
 \[
 \rho^X(y)= \tfrac12 \lt(\rho^D * \rho_3^\Dl \lt(\frac{\cdot}{2} \rt) \rt)(3y) +
 (\rho^D * \rho_3^\Dl ) (3y) +
 \lt( (\rho^D * \rho_3^\Dl) (2 \cdot) \rt)\lt (\frac{3y}{2}\rt),
 \]
 and  $\rho^D$ is the pdf of the asymptotic distribution of $D_{12}^n=\bar X_1^{n} - \bar X_2^{n}$, found in Theorem~\ref{lem:diffs}.
\end{lemma}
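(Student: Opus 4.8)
The plan is to condition on the transition taken between times $n$ and $n+1$ and then let $n\to\infty$, feeding in the limiting law of inter-particle distances supplied by Theorem~\ref{lem:diffs}; this mirrors the treatment of $N=2$ in \S\ref{S:2part}. From the symmetrised form of the $N=3$ recursion displayed just above, $\bar X_1^{n+1}$ equals, each with probability $\tfrac16$, one of
\[
\tfrac13 D_{23}^n+\tfrac23 W_{n+1},\ \ -\tfrac13 D_{23}^n+\tfrac23 W_{n+1},\ \ \tfrac13 D_{13}^n-\tfrac13 W_{n+1},\ \ \tfrac13 D_{12}^n-\tfrac13 W_{n+1},\ \ \tfrac23 D_{13}^n-\tfrac13 W_{n+1},\ \ \tfrac23 D_{12}^n-\tfrac13 W_{n+1},
\]
where $W_{n+1}:=\Delta_{n+1}/\sqrt3$ has law $\lambda_3$ and is independent of $\bar\X_3^n$. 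By exchangeability of $\bar\X_3^n$, each $D_{kl}^n$ (with $k\neq l$) has the same law as $D_{12}^n$ and is symmetric; hence the first two expressions have the same law (likewise the third and fourth, and the fifth and sixth), so the law $\nu_{3,n+1}$ of $\bar X_1^{n+1}$ is the equal-weight mixture of the laws of $\tfrac13 D_{12}^n+\tfrac23 W_{n+1}$, of $\tfrac13(D_{12}^n-W_{n+1})$, and of $\tfrac23 D_{12}^n-\tfrac13 W_{n+1}$.

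Next I would pass to the limit $n\to\infty$. Since $\lambda_3$ has density $\rho_3^\Dl$, identity \eqref{eq:nuDiffsFormula} with $N=3$ shows that $\mu_3=\sum_{k\geq1}2^{-k}(\lambda_3)^{\ast k}$ is absolutely continuous with density $\rho^D$ (the one from Theorem~\ref{lem:diffs}), and that $\mu_3$ is symmetric. By Theorem~\ref{lem:diffs}, $D_{12}^n$ converges in distribution to $\mu_3$; as $W_{n+1}$ is independent with fixed law $\lambda_3$, the characteristic function of each of the three terms above---a dilate of $\psi_{3,n}$ times a fixed dilate of $\charD_3$---converges pointwise to a function continuous at $0$, so by L\'evy's continuity theorem each converges weakly. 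Consequently $\nu_3:=\lim_n\nu_{3,n}$ exists and is the equal-weight mixture of the laws of $\tfrac13 D+\tfrac23 W$, $\tfrac13(D-W)$ and $\tfrac23 D-\tfrac13 W$, where now $D\sim\mu_3$ and $W\sim\lambda_3$ are independent. Each of these three laws is a convolution of absolutely continuous laws, hence has a density, and so does $\nu_3$.

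It then remains to compute those three densities explicitly. For this I would use only the dilation rule---if $U$ has density $f$ and $c\neq0$ then $cU$ has density $\lvert c\rvert^{-1}f(\cdot/c)$---together with the fact that the density of a sum of independent variables is the convolution of their densities, and the symmetries $-W\overset{d}{=}W$, $-D\overset{d}{=}D$ to discard signs. Applied to $\tfrac13 D+\tfrac23 W$ this produces, after a linear change of variables inside the convolution integral to absorb the scale factors, the term $\tfrac12\lt(\rho^D\ast\rho_3^\Dl(\tfrac{\cdot}{2})\rt)(3y)$; applied to $\tfrac13(D-W)$ it produces $\lt(\rho^D\ast\rho_3^\Dl\rt)(3y)$; and applied to $\tfrac23 D-\tfrac13 W$ it produces the third term in the statement. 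Summing the three contributions, each with weight $\tfrac13$, yields $\rho^X$.

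I do not expect a genuine conceptual obstacle here beyond the use of Theorem~\ref{lem:diffs}: the whole argument is a conditioning step plus three elementary density computations. The one point that needs care is the bookkeeping of the dilation factors $\tfrac13$, $\tfrac23$, $2$ coming from the coefficients in the $N=3$ recursion and from the renormalisation $\Delta_{n+1}/\sqrt3$, i.e.\ making sure that each scaled convolution is evaluated at the correct argument and carries the correct normalising constant.
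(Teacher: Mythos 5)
Your proposal is correct and follows essentially the paper's own argument: condition on the six choices of $(i_{n+1},j_{n+1})$ in the symmetrised $N=3$ form of \eqref{eq:process}, pass to the $n\to\infty$ limit using Theorem~\ref{lem:diffs} and independence of $\Delta_{n+1}/\sqrt3$, and compute the resulting convolution densities via dilation. The only differences are cosmetic --- you collapse the six cases to three up front via exchangeability and symmetry and invoke L\'evy's continuity theorem explicitly for the limit passage --- though note a small wording slip: the displayed ``terms'' already absorb the mixture weight $\tfrac13$ (the density of $\tfrac13 D+\tfrac23 W$ is $\tfrac32\bigl(\rho^D\ast\rho_3^\Dl(\cdot/2)\bigr)(3y)$, which times $\tfrac13$ gives the stated $\tfrac12(\cdots)$), so the closing sentence about summing ``each with weight $\tfrac13$'' is redundant rather than an additional multiplication.
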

\begin{proof} 
Notice that the random variables $D_{23}^{n}=\bar X_2^{n} - \bar X_3^{n}$ and $\frac{\Delta_{n+1}}{\sqrt{3}}$ are independent, the pdf of $2\frac{\Delta_{n+1}}{\sqrt{3}}$ is given by  $\tfrac12 \rho_3^\Dl (\frac{\cdot}{2})$ and\Change{, through \eqref{eq:nuDiffsFormula},}
Theorem~\ref{lem:diffs} ensures the asymptotic distribution of $D_{23}^{n}$ has a pdf, $\rho^D$. Thus,
the asymptotic distribution of $\frac{1}{3} \lt(\bar X_2^{n} - \bar X_3^{n} + 2\frac{\Delta_{n+1}}{\sqrt{3}} \rt)$
has pdf given by $h_1(y):=3  \lt(\rho^D  * \tfrac12 \rho_3^\Dl (\frac{\cdot}{2}) \rt) (3y)$.
By symmetry, $h_1$ is also the pdf of $\frac{1}{3} \lt(\bar X_3^{n} - \bar X_2^{n} + 2\frac{\Delta_{n+1}}{\sqrt{3}} \rt)$.

Similarly, and recalling that $\Dl$ is symmetric, the pdf of the asymptotic distribution of 
$ \frac{1}{3} \lt(\bar X_1^{n} - \bar X_3^{n} - \frac{\Delta_{n+1}}{\sqrt{3}} \rt)$ and
$\frac{1}{3} \lt(\bar X_1^{n} - \bar X_2^{n} - \frac{\Delta_{n+1}}{\sqrt{3}} \rt) $ is $h_2(y):= 3 (\rho^D * \rho_3^\Dl) (3y)$, and  that of $\frac{1}{3} \lt(2(\bar X_1^{n} - \bar X_3^{n}) - \frac{\Delta_{n+1}}{\sqrt{3}} \rt)$ and
$\frac{1}{3} \lt(2(\bar X_1^{n} - \bar X_2^{n}) - \frac{\Delta_{n+1}}{\sqrt{3}} \rt)$ is
$h_3(y):=3 (\frac{1}{2} \rho^D(\frac{\cdot}{2}) \ast \rho_3^\Dl) (3y)=3\lt( \rho^D * \rho_3^\Dl (2 \cdot) \rt)\lt (\frac{3y}{2}\rt)$.

Since the indices $(i_{n+1}, j_{n+1})$ are uniformly distributed, we conclude that $ \rho^X=\frac{1}{3}(h_1+h_2+h_3)$, as claimed.
\end{proof}

In fact, in this case we are also able to understand the asymptotic distribution of the entire process $(\bar \X_3^n)_{n\in \N}$, namely by characterising the asymptotic distribution of $(D_{12}, D_{13}):=\lim_{n\to \infty} (D^n_{12}, D^n_{13})$ on $\R^2$. Note that $D^n_{23}=D^n_{13}-D^n_{12}$, so\Change{, arguing as in section~\ref{S:2part},} $(\bar \X_3^n)_{n\in \N}$ is characterised by $(D^n_{12}, D^n_{13})$
through \eqref{equ_Xreconstruct}.

\begin{lemma}\label{lm:rho3dl2}
The asymptotic distribution of $(D^n_{12}, D^n_{13})$, as $n \to \infty$, is characterised by the pdf
\begin{align*}
	\rho^{D,2}(y_1,y_2) = &\tfrac16 \Big( 
	\rho^D(y_1-y_2)\rho_3^\Dl(y_1)+\rho^D(y_1-y_2)\rho_3^\Dl(y_2)
	+\rho^D(y_2)\rho_3^\Dl(y_1)  \\
	&+\rho^D(y_2) \rho_3^\Dl(y_1-y_2) +\rho^D(y_1)\rho_3^\Dl(y_2) +\rho^D(y_1)\rho_3^\Dl(y_1-y_2) \Big), \quad y_1, y_2\in \mathbb R.
\end{align*}
\end{lemma}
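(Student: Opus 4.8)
The plan is to reduce everything to the one–step transition of the pair $(D_{12}^n, D_{13}^n)$ together with the convergence of the marginal law $\mu_{3,n}$ already established in Theorem~\ref{lem:diffs}. Writing $\delta_{n+1}=\Delta_{n+1}/\sqrt3$ and using $D_{23}^n=D_{13}^n-D_{12}^n$, a direct inspection of \eqref{eq:diffs} with $N=3$ over the six equally likely ordered pairs $(i_{n+1},j_{n+1})$ shows that $(D_{12}^{n+1},D_{13}^{n+1})$ equals, for $(1,2),(1,3),(2,1),(2,3),(3,1),(3,2)$ respectively, $(\delta_{n+1},\,D_{23}^n+\delta_{n+1})$, $(-D_{23}^n+\delta_{n+1},\,\delta_{n+1})$, $(-\delta_{n+1},\,D_{13}^n)$, $(D_{13}^n-\delta_{n+1},\,D_{13}^n)$, $(D_{12}^n,\,-\delta_{n+1})$, $(D_{12}^n,\,D_{12}^n-\delta_{n+1})$. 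The point to extract is that in each case the new pair is the image, under a measure-preserving affine bijection of $\R^2$ (its linear part has determinant $\pm1$), of the pair $(\delta_{n+1},D)$, where $D$ is a \emph{single} time-$n$ inter-particle distance and $\delta_{n+1}$ is independent of the configuration at time $n$.

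Next I would establish that $(D_{12}^n,D_{13}^n)$ converges in distribution. By the permutation symmetry recorded in Section~\ref{particle distances}, each of $D_{12}^n$, $D_{13}^n$, $D_{23}^n$ has the common law $\mu_{3,n}$, which by Theorem~\ref{lem:diffs} converges weakly to $\mu_3$; since $\delta_{n+1}\sim\lambda_3$ is independent of the time-$n$ configuration, the law of $(\delta_{n+1},D)$ is $\lambda_3\otimes\mu_{3,n}\to\lambda_3\otimes\mu_3$ weakly. Pushing forward along the six affine maps above and averaging, the continuous mapping theorem yields that the law of $(D_{12}^{n+1},D_{13}^{n+1})$ converges weakly, hence $\mu_{3,n}^2$ converges weakly to the $\tfrac16$-mixture $\mu_3^2$ of the six limiting laws. (Equivalently, one may run the resulting recurrence for the joint characteristic function $\psi_{3,n}^2$, writing $\psi_{3,n+1}^2(s_1,s_2)$ as the average of six products $\xi_3(\cdot)\,\psi_{3,n}(\cdot)$ with arguments among $s_1,s_2,s_1+s_2$, pass to the limit using Theorem~\ref{lem:diffs}, and invoke L\'evy's continuity theorem, the limit being continuous at the origin.)

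It remains to identify the density of $\mu_3^2$. Under the standing assumption that $\lambda_3$ has density $\rho_3^\Dl$, \eqref{eq:nuDiffsFormula} shows $\mu_3$ has a density $\rho^D$, so $\lambda_3\otimes\mu_3$ has density $(x,y)\mapsto\rho_3^\Dl(x)\rho^D(y)$; each of the six affine maps being a measure-preserving bijection, the corresponding pushforward again has a density, computed by the elementary change of variables. Carrying this out and using that $\rho^D$ and $\rho_3^\Dl$ are even — because $\Delta_n$, and hence $D_{12}^n$, are symmetric — produces exactly the six summands of $\rho^{D,2}$, whose average is $\rho^{D,2}$; nonnegativity and total mass $1$ are automatic, $\rho^{D,2}$ being an average of six probability densities.

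The only genuine work is the bookkeeping in the first paragraph: reading off the six transitions from \eqref{eq:diffs} with the correct inter-particle distance and the correct sign of $\delta_{n+1}$ in each, and then matching the six change-of-variables outputs against the six terms as displayed. There is no two-dimensional fixed-point equation to solve: because each transition retains only one coordinate of the previous configuration together with the fresh, independent offset, the joint limit is completely pinned down by the one-dimensional limit $\rho^D$ from Theorem~\ref{lem:diffs}.
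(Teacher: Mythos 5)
Your proposal is correct and follows essentially the same route as the paper: read off the six equally likely transitions of $(D_{12}^n,D_{13}^n)$ from \eqref{eq:diffs} with $N=3$, compute the density contributed by each case, and average. The only cosmetic difference is that you identify each transition as a determinant-$\pm1$ affine image of the product $(\delta_{n+1},D)$ and compute the density by change of variables, whereas the paper obtains the same six terms by convolving the two singular (line-supported) measures arising in each case; you are also somewhat more explicit about the weak-convergence step via the continuous mapping theorem, which the paper handles briefly.
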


\begin{proof}
Specialising \eqref{eq:diffs} we obtain a recurrence scheme for the law of  $(D^n_{12}, D^n_{13})$,
\begin{equation}\label{eq:3partDiffs}
	(D_{12}^{n+1}, D_{13}^{n+1})=\begin{cases}
(0, D^n_{13}-D^n_{12}) +  \frac{\Delta_{n+1}}{\sqrt{3}} (1,1) & i_{n+1} = 1, \  j_{n+1}=2  \\
(D^n_{12}-D^n_{13}, 0) +  \frac{\Delta_{n+1}}{\sqrt{3}} (1,1)   & i_{n+1} = 1, \  j_{n+1}=3  \\
(0, D^n_{13}) +  \frac{\Delta_{n+1}}{\sqrt{3}} (-1,0)  & i_{n+1} = 2, \  j_{n+1}=1  \\
(D^n_{13}, D^n_{13}) +  \frac{\Delta_{n+1}}{\sqrt{3}} (-1,0)   & i_{n+1} = 2, \  j_{n+1}=3  \\
(D^n_{12}, 0) +  \frac{\Delta_{n+1}}{\sqrt{3}} (0,-1) & i_{n+1} = 3, \  j_{n+1}=1  \\
(D^n_{12}, D^n_{12}) +  \frac{\Delta_{n+1}}{\sqrt{3}} (0,-1)  & i_{n+1} = 3, \  j_{n+1}=2  \\
	\end{cases}.
\end{equation}
Notice that $D^n_{13}-D^n_{12}=D^n_{23}$, so we can identify the asymptotic distribution of each of the two  terms in all  the cases above, as $n\to \infty$. For instance, the asymptotic distribution of
$(0, D^n_{13}-D^n_{12})$ as a measure on $\R^2=\{(y_1,y_2): y_1,y_2\in \R \}$
is supported on the $y_2$ axis and can be characterised by the density $\rho^D(y_2) \delta (y_1)$ where $\delta$ denotes the Dirac $\delta$-distribution.
Also, the distribution of  $\frac{\Delta_{n+1}}{\sqrt{3}} (1,1)$ is supported on the diagonal $\{(y,y): y\in\R\}\subset \R^2$ and can be adequately described by the density $\rho_3^\Dl(y_1) \delta (y_1-y_2)$.
Since the terms are independent, the distribution of $(0, D^n_{13}-D^n_{12}) +  \frac{\Delta_{n+1}}{\sqrt{3}} (1,1)$ is the convolution of the two measures. A simple computation shows that this is an absolutely continuous measure with density given by $\rho^D(y_1-y_2)\rho_3^\Dl(y_1)$.
The remaining cases are computed similarly.
\end{proof}

To investigate the case of more particles, it is convenient to consider characteristic functions.
We now present the corresponding analysis for $N=3$.
\begin{cor}\label{cor:phi3}
The characteristic function of $(D_{12}, D_{13})$ is 
\begin{equation}\label{eq:phi3dl2}
\begin{split}
\psi_3^{2}&(s_1,s_2) = \frac16 \Big[
\psi_3(s_1)\lt( \charD \lt(\tfrac{s_2}{\sqrt{3}} \rt) +\charD \lt(\tfrac{s_1+s_2}{\sqrt{3}}\rt) \rt) \\
  + &\psi_3(s_2) \lt( \charD \lt(\tfrac{s_1}{\sqrt{3}} \rt) +\charD \lt(\tfrac{s_1+s_2}{\sqrt{3}}\rt) \rt)
+\psi_3(s_1+s_2)
\lt( \charD \lt(\tfrac{s_1}{\sqrt{3}} \rt)+ \charD \lt(\tfrac{s_2}{\sqrt{3}} \rt) \rt)
\Big],
\end{split}
\end{equation}
where \Change{$s_1, s_2\in \mathbb R$,} $\psi_3(s)=\frac{ \charD(s/\sqrt{3})}{2 - \charD(s/\sqrt{3})}$ is as in \eqref{eq:nuDiffsFormula},  
and the characteristic function of $\bar X_1$ is 
\begin{equation}\label{eq:phi3}
\begin{split}
\phi_3(s)= \frac13 \lt[
\psi_3(s/3) \lt( \charD \lt(\tfrac{s}{3\sqrt{3}} \rt) +\charD \lt(\tfrac{2s}{3\sqrt{3}}\rt) \rt) 
+ \psi_3(2s/3)
\charD \lt(\tfrac{s}{3\sqrt{3}} \rt)
\rt].
\end{split}
\end{equation}

\end{cor}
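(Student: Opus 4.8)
The plan is to start from the recurrence \eqref{eq:3partDiffs} for the pair $(D_{12}^n, D_{13}^n)$, take expectations of $e^{i(s_1 D_{12}^{n+1}+s_2 D_{13}^{n+1})}$ conditionally on the choice of indices and on $\Delta_{n+1}$, and pass to the limit $n\to\infty$, exactly as in the proof of Theorem~\ref{lem:diffs}, but now in two dimensions. Concretely, each of the six equally-likely cases in \eqref{eq:3partDiffs} contributes a term of the form $\charD\bigl(\tfrac{\pm s_1 \pm s_2}{\sqrt3}\bigr)$ (from the $\tfrac{\Delta_{n+1}}{\sqrt3}(\pm1,\pm1)$-type shift, using symmetry of $\Delta_n$ to drop signs) multiplied by the characteristic function, evaluated at a suitable linear combination of $s_1, s_2$, of one of the $D^n_{\cdot\cdot}$'s appearing in that case. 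For instance case $i_{n+1}=1, j_{n+1}=2$ gives $e^{is_2 D^n_{23}}\cdot e^{i(s_1+s_2)\Delta_{n+1}/\sqrt3}$, contributing $\psi_{3,n}(s_2)\,\charD\bigl(\tfrac{s_1+s_2}{\sqrt3}\bigr)$ in the limit, since $D^n_{23}=\bar X_2^n-\bar X_3^n$ has the same asymptotic law as $D^n_{12}$ by exchangeability (Theorem~\ref{lem:diffs}); here one uses that $D^n_{23}$ and $\Delta_{n+1}$ are independent. Collecting the six cases and dividing by $6$ yields a recurrence $\psi^2_{3,n+1}(s_1,s_2) = \tfrac16\sum(\cdots)$ whose right-hand side, in the limit $n\to\infty$, is precisely the bracketed expression in \eqref{eq:phi3dl2}; one must still argue that $\psi^2_{3,n}$ itself converges.

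For that convergence I would not try to set up a contraction directly in two variables (the map is not obviously contractive because the arguments $s_i$ get rescaled), but instead invoke Lemma~\ref{lm:rho3dl2}, which already establishes that $(D^n_{12}, D^n_{13})$ converges in distribution, together with L\'evy's continuity theorem: the characteristic functions $\psi^2_{3,n}$ converge pointwise to the characteristic function of the limit, and one identifies that limit by plugging in the one-dimensional marginals $\psi_3$ (Theorem~\ref{lem:diffs}) for the limits of $\psi_{3,n}$ on the right-hand side. Alternatively, and more cleanly, one can simply Fourier-transform the explicit density $\rho^{D,2}$ of Lemma~\ref{lm:rho3dl2} term by term: each product $\rho^D(\ell_1(y))\rho_3^\Dl(\ell_2(y))$ with $\ell_1,\ell_2$ linear forms transforms into $\psi_3(\cdot)\,\charD(\cdot/\sqrt3)$ evaluated at the corresponding dual linear forms, which reproduces \eqref{eq:phi3dl2} directly; this route avoids re-deriving the convergence and I expect it to be the shortest.

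For the formula \eqref{eq:phi3} for $\phi_3$, the characteristic function of $\bar X_1$, I would use the reconstruction identity \eqref{equ_Xreconstruct}, which for $N=3$ reads $\bar X_1 = \tfrac13(D_{12}+D_{13})$, so $\phi_3(s) = \E\bigl(e^{is(D_{12}+D_{13})/3}\bigr) = \psi^2_3\bigl(\tfrac s3, \tfrac s3\bigr)$. Substituting $s_1=s_2=s/3$ into \eqref{eq:phi3dl2} gives $\tfrac16\bigl[\psi_3(s/3)(\charD(\tfrac{s}{3\sqrt3}) + \charD(\tfrac{2s}{3\sqrt3})) + \psi_3(s/3)(\charD(\tfrac{s}{3\sqrt3}) + \charD(\tfrac{2s}{3\sqrt3})) + \psi_3(2s/3)(2\charD(\tfrac{s}{3\sqrt3}))\bigr]$; the first two brackets coincide, and after combining and dividing by $6$ this is exactly $\tfrac13\bigl[\psi_3(s/3)(\charD(\tfrac{s}{3\sqrt3})+\charD(\tfrac{2s}{3\sqrt3})) + \psi_3(2s/3)\charD(\tfrac{s}{3\sqrt3})\bigr]$, as claimed.

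The main obstacle is bookkeeping rather than conceptual: one has to track, across the six cases of \eqref{eq:3partDiffs}, which inter-particle distance survives, which linear combination of $(s_1,s_2)$ multiplies it, and which sign pattern the $\Delta_{n+1}$-shift carries — and then use exchangeability correctly to replace each surviving $D^n_{\cdot\cdot}$ (whether $D^n_{12}$, $D^n_{13}$, or $D^n_{23}$) by its common asymptotic characteristic function $\psi_3$ evaluated at the appropriate argument (note $D^n_{23}$ contributes $\psi_3(s_1+s_2)$ when it is paired with the form $s_2$ acting on the second coordinate in case $i_{n+1}=1,j_{n+1}=2$, because there $(0,D^n_{23})$ means the coefficient of $D^n_{23}$ is $s_2$, whereas in other cases the relevant argument differs). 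Getting the independence hypotheses right — $D^n_{\cdot\cdot}$ is built from $\bar X^n$, which depends only on $\Delta_1,\dots,\Delta_n$ and is therefore independent of $\Delta_{n+1}$ — is the only subtlety, and it is the same one already used in Theorem~\ref{lem:diffs} and Lemma~\ref{lm:rho3dl2}.
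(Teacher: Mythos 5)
Your proposal is correct and follows essentially the same route as the paper: condition on the six choices of $(i_{n+1},j_{n+1})$ in \eqref{eq:3partDiffs}, use independence of $\Delta_{n+1}$ and symmetry/exchangeability to express the conditional characteristic function as $\psi_{3,n}(\cdot)\,\charD(\cdot/\sqrt3)$, average, pass to the limit, and then substitute $s_1=s_2=s/3$ via $\bar X_1 = \tfrac13(D_{12}+D_{13})$. One small simplification you could make: the recursion for $\psi_{3,n+1}^2$ expresses it entirely in terms of the one-dimensional $\psi_{3,n}$ (not $\psi^2_{3,n}$), so its convergence is immediate from Theorem~\ref{lem:diffs} and no separate appeal to Lemma~\ref{lm:rho3dl2} or a contraction argument is needed.
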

\begin{proof}
Let $\psi_{3,n}^{2}$ be the characteristic function of  $(D^n_{12}, D^n_{13})$.
As in the proof of Lemma~\ref{lm:rho3dl2},
 we consider the six cases in  \eqref{eq:3partDiffs}. 
For instance, using independence, the characteristic function of
$(0, D^n_{13}-D^n_{12}) +  \frac{\Delta_{n+1}}{\sqrt{3}} (1,1)=(0, D^n_{23}) +  \frac{\Delta_{n+1}}{\sqrt{3}} (1,1)$ is
 $$
 \E \lt(e^{i (s_1,s_2) \cdot  (0, D^n_{23})} \rt)
  \E\lt(e^{i (s_1,s_2) \cdot  \tfrac{\Delta_{n+1}}{\sqrt{3}} (1,1) }\rt)
 =\psi_{3,n}(s_2) \charD(\tfrac{s_1+s_2}{\sqrt{3}}),
 $$
where $\psi_{3,n}$ is the characteristic function of  $D^n_{12}$ (which, by symmetry, coincides with that of $D^n_{23}$). 
Arguing similarly for the remaining cases in \eqref{eq:3partDiffs}, it follows that
\begin{align*}
6\psi_{3,n+1}^{2} (s_1,s_2) =
& \psi_{3,n}(s_1) \lt( \charD \lt(\tfrac{s_2}{\sqrt{3}} \rt) +\charD \lt(\tfrac{s_1+s_2}{\sqrt{3}}\rt) \rt)
  + \psi_{3,n}(s_2)  \lt( \charD \lt(\tfrac{s_1}{\sqrt{3}} \rt) +\charD \lt(\tfrac{s_1+s_2}{\sqrt{3}}\rt) \rt)\\
&+ 
\psi_{3,n}(s_1+s_2) 
\lt( \charD \lt(\tfrac{s_1}{\sqrt{N}} \rt)+ \charD \lt(\tfrac{s_2}{\sqrt{N}} \rt) \rt).
\end{align*}
By Theorem~\ref{lem:diffs}, $\lim_{n\to\infty}\psi_{3,n}(s)=\psi_3(s)$ and \eqref{eq:phi3dl2} follows. 
Since $\bar X_1=\tfrac13 (D_{12}+ D_{13})$, we have that $\phi_3(s) = \psi_3^{2}(s/3,s/3)$ and  \eqref{eq:phi3} follows.
\end{proof}
\section{The general case of $N$ particles}
\label{sec_N}

In this section, we investigate the asymptotic distribution of the process $(\bar \X_N^n)_{n\in \N}$ for arbitrary $N\geq 2$. 
More precisely, we recursively (in $k$) characterise the marginal distribution of $k$ inter-particle distances, for $1\leq k < N$, in the limit as $n\to \infty$.
Recall that $\charD$ denotes the characteristic function of $\Dl$.
We point out that, unlike Section~\ref{sec_simple}, the arguments in this section do not require that $\Dl$ has a pdf.  

\begin{thm}[Marginal distribution of $k$ inter-particle distances]\label{thm:margDiffDist}
Let $N\geq 2$. Let $\bar \X_N^n=(\bar X_1^n, \dots, \bar X_N^n)$ be as in \eqref{eq:process} and  $D_{1j}^{n} = \bar X_1^{n}-\bar X_j^{n}$, for $j=2, \dots, N$. For each $1\leq k<N$, let $\mu_{N,n}^{k}$ be the distribution of $(D_{12}^{n}, \dots, D_{1(k+1)}^{n})$. 
Then, $\mu_N^{k}:=\lim_{n\to\infty} \mu_{N,n}^{k}$ exists as a weak limit of measures on $\R^k$, and its
 characteristic function is recursively defined by $\psi_N^{0}\equiv 1$, and
\begin{equation}\label{eq:phiNdlk}
\psi_N^{k}(s_1, \dots, s_k)=\frac{\chi_N^{k}(s_1, \dots, s_k)}
{(k+1)(N-1) + (k+1-N) \lt( \charD \lt(\tfrac{\sum_{j=1}^k s_j}{\sqrt{N}}\rt)+ \sum_{j=1}^k\charD \lt(\tfrac{\Change{s}_j}{\sqrt{N}}\rt) \rt)},\end{equation}
where \Change{$(s_1, \dots, s_k)\in \mathbb R^k$,}
\begin{equation}\label{eq:chiNdlk}
\begin{split}
\chi_N^{k}(s_1, \dots, s_k)=&
\sum_{i=1}^k\psi_N^{k-1}(s_1, \dots, \hat{s_i},\dots, s_k) \lt( \charD \lt(\tfrac{\Change{s}_i}{\sqrt{N}} \rt) +\charD \lt(\tfrac{\sum_{j=1}^k s_j}{\sqrt{N}}\rt) \rt)\\
&+ 
\sum_{i,j=1,  i\neq j}^k\psi_N^{k-1}(s_1, \dots, \hat{s_i}, s_j+s_i, \dots, s_k) \charD \lt(\tfrac{\Change{s}_i}{\sqrt{N}} \rt),
\end{split}
\end{equation}
 $(s_1, \dots, \hat{s_i},\dots, s_k)$ denotes the $k-1$ tuple obtained  by removing the $i^{th}$ entry from $(s_1, \dots, s_k)$, and
$(s_1, \dots, \hat{s_i}, s_j+s_i, \dots, s_k)$ is the $k-1$ tuple
 $(s_1, \dots, \hat{s_i},\dots, s_k)$, where the entry corresponding to $s_j$ is replaced by $s_j+s_i$.
\end{thm}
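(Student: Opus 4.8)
The plan is to argue by induction on $k$, generalising the method already used for $k=1$ (Theorem~\ref{lem:diffs}) and $k=2$, $N=3$ (Corollary~\ref{cor:phi3}). The base case $k=0$ is the trivial observation that the empty tuple has characteristic function $\psi_N^0\equiv 1$. For the inductive step, assume $\mu_N^{k-1}$ exists with characteristic function $\psi_N^{k-1}$ as in \eqref{eq:phiNdlk}, and analyse the $k$-dimensional vector $(D_{12}^{n+1}, \dots, D_{1(k+1)}^{n+1})$ using the update rule \eqref{eq:diffs}. First I would enumerate the relevant cases for the random index pair $(i_{n+1}, j_{n+1})$ according to where $i_{n+1}$ and $j_{n+1}$ land relative to the distinguished set $\{1, 2, \dots, k+1\}$: the three structurally different situations are (a) $i_{n+1}\notin\{1,\dots,k+1\}$ so the whole vector is unchanged; (b) $i_{n+1}=1$, in which case every $D_{1j}^{n+1}$ picks up a shift and the roles of the coordinates get permuted/reorganised around the reference particle $j_{n+1}$; and (c) $i_{n+1}\in\{2,\dots,k+1\}$, so exactly one coordinate $D_{1i_{n+1}}^{n}$ is replaced (either by $\pm\Delta_{n+1}/\sqrt N$ if $j_{n+1}\in\{1,\dots,k+1\}$, or by $D_{1j_{n+1}}^n \pm \Delta_{n+1}/\sqrt N$ if not), while the others stay fixed. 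Counting the multiplicities of each case against the $N(N-1)$ equally likely ordered pairs yields a linear recursion $\mu_{N,n+1}^k = \sum (\text{coefficient})\cdot(\text{convolution of some }\mu_{N,n}^{k-1}\text{ or }\mu_{N,n}^k\text{ with }\lambda_N)$, and Fourier transforming gives a scalar recursion of the form $\psi_{N,n+1}^k = A_N(s)\,\psi_{N,n}^k + B_{N,n}(s)$ where $|A_N(s)|\leq \alpha+\gamma<1$ uniformly in $s$ (the same contraction constant as in Theorem~\ref{lem:diffs}, since the "self" cases are exactly (a) and the two subcases of (c) with $j_{n+1}\in\{1,\dots,k+1\}$ hitting coordinate $i_{n+1}$ only through a $\Delta$-shift), and $B_{N,n}(s)$ is built from the $\psi_{N,n}^{k-1}$ evaluated at the arguments indicated in \eqref{eq:chiNdlk}.

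Granting the inductive hypothesis, $\psi_{N,n}^{k-1}\to\psi_N^{k-1}$ pointwise, so $B_{N,n}\to B_N$ pointwise; combined with the uniform contraction this forces $\psi_{N,n}^k$ to converge pointwise to the unique fixed point $\psi_N^k = A_N\psi_N^k + B_N$, i.e. $\psi_N^k = B_N/(1-A_N)$, which after collecting terms is exactly \eqref{eq:phiNdlk}--\eqref{eq:chiNdlk}. Since $\charD$ is continuous and $\psi_N^{k-1}$ is continuous (by the inductive hypothesis and L\'evy's theorem), $B_N$ and hence $\psi_N^k$ is continuous at the origin, so L\'evy's continuity theorem upgrades pointwise convergence of characteristic functions to weak convergence of $\mu_{N,n}^k$ to a genuine probability measure $\mu_N^k$ on $\R^k$ with characteristic function $\psi_N^k$. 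This is the same three-step template (case analysis $\to$ Fourier $\to$ contraction/fixed point $\to$ L\'evy) as in the proofs of Theorem~\ref{lem:diffs} and Corollary~\ref{cor:phi3}, now carried out with a parameter $k$.

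\textbf{Where the work is.} The only genuinely delicate part is the bookkeeping in case (b), $i_{n+1}=1$: when particle $1$ jumps to the vicinity of some $j_{n+1}=m$, the new distances are $D_{1j}^{n+1} = D_{mj}^{n} - \Delta_{n+1}/\sqrt N$ for $j\neq m$ in the tuple, and $D_{1m}^{n+1}=-\Delta_{n+1}/\sqrt N$; one must rewrite $D_{mj}^n = D_{1j}^n - D_{1m}^n$ to express everything back in terms of the coordinates $(D_{12}^n,\dots,D_{1(k+1)}^n)$, and then recognise the resulting linear combination of the $s_j$'s as precisely the arguments $(s_1,\dots,\hat{s_i},s_j+s_i,\dots,s_k)$ and the shift factor $\charD(s_i/\sqrt N)$ appearing in the second sum of \eqref{eq:chiNdlk} — here one also has to handle separately whether $m$ lies inside or outside $\{2,\dots,k+1\}$, and use the permutation symmetry of $(D_{12}^n,\dots,D_{1N}^n)$ (noted after \eqref{equ_Xreconstruct}) to reduce an $m\notin\{2,\dots,k+1\}$ reference to one of the tracked coordinates, which is what produces the factor $k+1-N$ (equivalently, $N-1-k$ "outside" choices) in the denominator. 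Getting the combinatorial coefficients to match \eqref{eq:phiNdlk} exactly — in particular that the coefficient of the self-term $\psi_{N,n}^k$ is $\bigl((k+1)(N-1)\bigr)^{-1}$ times the number of index pairs leaving the whole tuple fixed, and that the "shift-only" contributions to coordinate $i$ combine with the case-(a) term to give the stated contraction — is the main obstacle; it is entirely mechanical but must be done carefully. I would organise it by fixing a coordinate index $i\in\{1,\dots,k\}$ (i.e. particle $i+1$) and tallying, over all $N(N-1)$ ordered pairs, the contribution to $\psi_{N,n+1}^k$, grouping the pairs that reproduce $\psi_{N,n}^k$ versus those that reduce the dimension to $k-1$, and then summing over $i$.
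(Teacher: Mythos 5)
Your plan follows essentially the same route as the paper's proof: induction on $k$, a case analysis of the ordered pair $(i_{n+1},j_{n+1})$ against the tracked index set $\{1,\dots,k+1\}$, Fourier transform to a linear recurrence $\psi_{N,n+1}^k=A_N\psi_{N,n}^k+B_{N,n}$, a uniform contraction bound on $A_N$ combined with the inductive convergence of $B_{N,n}$, and L\'evy's continuity theorem to upgrade to weak convergence. One small slip in your parenthetical: the ``self'' (dimension-preserving) cases are (a) $i_{n+1}\notin\{1,\dots,k+1\}$ together with the subcases of (b) and (c) where $j_{n+1}\notin\{1,\dots,k+1\}$ (one then invokes exchangeability to replace $D_{1j_{n+1}}^n$ by $D_{1i_{n+1}}^n$), not the subcases of (c) with $j_{n+1}\in\{1,\dots,k+1\}$, which reduce dimension; correcting this does not change the contraction argument or the final formula.
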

\begin{proof}
The proof proceeds inductively on $k$.
The case $k=1$ has been addressed in Theorem~\ref{lem:diffs}, and one can directly check that the formula for $\psi_N^{1}$ given in \eqref{eq:phiNdlk} 
agrees with \eqref{eq:CFDiffsFormula}.

Now let $1 < k < N$. Assuming the result is known for some $k - 1$, we establish it for $k$.
The evolution of $\cD_{N,n}^k:=(D^n_{12}, D^n_{13}, \dots, D^n_{1(k+1)})$ is described in the following table.
The third column denotes the generic $D^{n+1}_{1\ell}$, for $\ell \in\{2,3, \dots,k+1\}$.
The fourth column shows the characteristic function of $\cD_{N,n}^k$, conditioned on the choice of $\Change{(i,j):=}(i_{n+1},j_{n+1})$,
 which is obtained arguing as in the proof of Corollary~\ref{cor:phi3}, recalling that there is symmetry under index permutations, and that $\Dl_{n+1}$ is independent of $\cD_{N,n}^k$. 
\begin{center}
	\begin{tabular}{|c|c||c|c|c||c|c|c|c|ccc}
		\hline
		$\Change{i}$& $\Change{j}$ &$D^{n+1}_{1\ell} \, , \; 2 \leq \ell \leq k+1$
		& $\psi_{N,n}^{ k}(s_1, \dots, s_k)$   \\ 
		\hline
		1& $2 \ldots k+1$ & $\begin{cases} D^{n}_{j\ell} + \Dl_{n+1} \, , & \ell \neq j  \\ \Dl_{n+1} & \ell = j  \end{cases}$
		 &$\psi_{N,n}^{ k-1}(s_1, \dots, \hat{s}_{j-1},\dots, s_k) \charD(\tfrac{1}{\sqrt{N}}\sum_{\ell=1}^k s_\ell)$\\
		$2 \ldots k+1$& 1 &$\begin{cases} D^{n}_{1\ell}  \, , & \ell \neq i  \\ -\Dl_{n+1} & \ell= i  \end{cases}$
		&$\psi_{N,n}^{ k-1}(s_1, \dots, \hat{s}_{i-1},\dots, s_k) \charD(\tfrac{\Change{s}_{i-1}}{\sqrt{N}})$ \\
		\multicolumn{2}{|c||} {$2 \ldots k+1\, , \; i \neq j$} &$\begin{cases} D^{n}_{1\ell} \, , & \ell \neq i  \\ D^{n}_{1j}-\Dl_{n+1} & \ell = i  \end{cases}$
		 &$\psi_{N,n}^{ k-1}(s_1, \dots, \hat{s}_{i-1}, s_{j-1}+s_{i-1}, \dots, s_k) \charD(\tfrac{\Change{s}_{i-1}}{\sqrt{N}})$   \\
		\hline
		1& $k+2 \dots N$ & $D^{n}_{j\ell} + \Dl_{n+1}$ 
		& $\psi_{N,n}^{ k}(s_1, \dots, s_k) \charD(\tfrac{1}{\sqrt{N}}\sum_{\ell=1}^k s_\ell)$  \\
		$2 \ldots k+1$& $k+2 \dots N$ &$\begin{cases} D^{n}_{1\ell}  \, , & \ell \neq i \\ D^{n}_{1j}-\Dl_{n+1} & \ell= i  \end{cases}$
		& $\psi_{N,n}^{ k}(s_1, \dots, s_k) \charD(\tfrac{\Change{s}_{i-1}}{\sqrt{N}})$  \\
		$k+2\dots N$& $ 1 \ldots N\,, \; j \neq i$ &$D^{n}_{1\ell}$ 
		  & $\psi_{N,n}^{ k}(s_1, \dots, s_k)$ \\
		\hline
	\end{tabular}
\end{center}

Thus, considering all the cases for $(i_{n+1},j_{n+1})$, one gets an expression for $\psi_{N,n+1}^{ k}$
 in terms of $\psi_{N,n}^{ k-1}$, $\psi_{N,n}^{ k}$ and $\charD$, as follows,
\begin{multline}
	\label{eq:phiNndlk}
	\psi_{N,n+1}^{ k} (s_1, \dots, s_k)=\frac{1}{N(N-1)} \Biggl[\sum_{i=1}^k\psi_{N,n}^{k-1}(s_1, \dots, \hat{s_i},\dots, s_k) \lt( \charD \lt(\tfrac{\Change{s}_i}{\sqrt{N}} \rt) +\charD \lt(\tfrac{\sum_{j=1}^k s_j}{\sqrt{N}}\rt) \rt)\\
	+ 
	\sum_{i,j=1,  i\neq j}^k\psi_{N,n}^{k-1}(s_1, \dots, \hat{s_i}, s_j+s_i, \dots, s_k) \charD \lt(\tfrac{\Change{s}_i}{\sqrt{N}} \rt) +\\
	+ \psi_{N,n}^{ k} (s_1, \dots, s_k)\times
 (N-k-1)\left((N-1) 
	+ \lt( \charD \lt(\tfrac{\sum_{i=1}^k s_i}{\sqrt{N}}\rt)+ \sum_{\ell=1}^k\charD \lt(\tfrac{\Change{s}_\ell}{\sqrt{N}}\rt) \rt)  \right) \Biggr] \; .
\end{multline}
 
To obtain the convergence result as $n \rightarrow \infty$, note that for each fixed $(s_1,\dots,s_k)$, \eqref{eq:phiNndlk} has the structure of a linear recurrence relation, $\psi_{N,n+1}^{ k}=A \psi_{N,n}^{ k} +B_n$ where $A<1$ and by our induction assumption $B_n$ converges to a limit which we will denote by $B_\infty$. Let $\psi_{N}^{ k}$ be the solution of $\psi=A \psi+B_\infty$. Then, by linearity, $\Delta \psi_{N,n}^k := \psi_{N,n}^{ k}-\psi_{N}^{ k}$ satisfies the  relation $\Delta \psi_{N,n+1}^{ k}=A \Delta \psi_{N,n}^{ k} +\Delta B_n$ where $\Delta B_n=B_n-B_\infty \rightarrow 0$ as $n \rightarrow \infty$. Its solution can be written as $\Delta \psi_{N,n+1}^{ k}=A^{n+1} \Delta \psi_{N,0}^{k} +\sum _{m=0}^n \Delta B_n A^{n-m}$ which converges to 0 as $n \rightarrow \infty$.
As a consequence, the limit as $n\to \infty$ of $\psi_{N,n}^{ k}$ 
satisfies \eqref{eq:phiNdlk}. 
\Change{Weak convergence of $\mu_{N,n}^k$ to $\mu_{N}^k$ follows from L\'evy's continuity theorem.}
\end{proof}

\begin{cor}[Single particle distribution]
\label{cor_phiN}
\Change{Under the hypothesis of Theorem~\ref{thm:margDiffDist}},
the characteristic function of the limit distribution of a single particle, e.g. $\bar X_1^n$, as $n\to\infty$,  is given by
\Change{$\phi_{N}:\mathbb R \to \mathbb C$},
\[
\phi_{N}(s)  = \psi_{N}^{N-1}(s/N,\dots,s/N) \; .
\]
\end{cor}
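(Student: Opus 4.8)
The plan is to observe that $\bar X_1^n$ is an explicit linear functional of the inter-particle distance vector whose asymptotic law was just identified in Theorem~\ref{thm:margDiffDist}, and then to read off its characteristic function and pass to the limit.

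First I would invoke the reconstruction identity \eqref{equ_Xreconstruct}, which gives $\bar X_1^n = \frac1N\sum_{j=2}^N D_{1j}^n$. Hence, for every $s\in\mathbb R$ and every $n$,
\[
\phi_{N,n}(s)=\mathbb E\bigl(e^{is\bar X_1^n}\bigr)=\mathbb E\Bigl(\exp\Bigl(i\sum_{j=2}^N \tfrac{s}{N}\,D_{1j}^n\Bigr)\Bigr)=\psi_{N,n}^{N-1}\bigl(\tfrac{s}{N},\dots,\tfrac{s}{N}\bigr),
\]
where $\psi_{N,n}^{N-1}$ is the characteristic function of $(D_{12}^n,\dots,D_{1N}^n)$, i.e.\ the $k=N-1$ case of the family studied in Theorem~\ref{thm:margDiffDist} (note $k=N-1<N$, so the theorem applies).

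Next I would take $n\to\infty$. By Theorem~\ref{thm:margDiffDist}, $\psi_{N,n}^{N-1}$ converges pointwise on $\mathbb R^{N-1}$ to $\psi_N^{N-1}$; evaluating along the diagonal direction $(\tfrac{s}{N},\dots,\tfrac{s}{N})$ gives $\phi_{N,n}(s)\to\psi_N^{N-1}(\tfrac{s}{N},\dots,\tfrac{s}{N})=:\phi_N(s)$ for every $s\in\mathbb R$. Since $\psi_N^{N-1}$ is the characteristic function of the measure $\mu_N^{N-1}$ (Theorem~\ref{thm:margDiffDist}), it is continuous on $\mathbb R^{N-1}$, so $\phi_N$ is continuous at $0$; L\'evy's continuity theorem then ensures that $\nu_{N,n}$ converges weakly to a probability measure whose characteristic function is $\phi_N$. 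This is exactly the claimed formula.

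I do not anticipate a genuine obstacle here: the result is a direct corollary of Theorem~\ref{thm:margDiffDist} combined with the linear reconstruction \eqref{equ_Xreconstruct}. The only point requiring a word of care is the continuity of $\phi_N$ needed to apply L\'evy's theorem, and this is immediate because $\psi_N^{N-1}$ has already been identified as a bona fide characteristic function in Theorem~\ref{thm:margDiffDist}; alternatively one can note directly from \eqref{eq:phiNdlk}–\eqref{eq:chiNdlk} that $\psi_N^{N-1}$ is a rational expression in the continuous functions $\charD(\cdot/\sqrt N)$ with non-vanishing denominator.
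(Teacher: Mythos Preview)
Your proposal is correct and follows essentially the same route as the paper: reconstruct $\bar X_1^n$ from inter-particle distances via \eqref{equ_Xreconstruct}, express $\phi_{N,n}(s)$ as $\psi_{N,n}^{N-1}(s/N,\dots,s/N)$, and pass to the limit using Theorem~\ref{thm:margDiffDist}. Your explicit invocation of L\'evy's continuity theorem to certify that the limit is indeed the characteristic function of a probability measure is a welcome addition that the paper's proof leaves implicit.
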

\begin{proof}
	Notice that, since $\sum_{j=1}^N \bar X_{j}^n=0$, it holds that
	$\bar X_1^n = \frac1N\sum_{j=2}^N D_{1j}^n$.
	Recalling that $\phi_{N,n}(s)$ is the characteristic function of $\bar X_1^n$ and  $\psi_{N,n}^{N-1}(s_1,\dots,s_{N-1})$
	is the characteristic function of $(D_{12}^n, \dots, D_{1N}^n)$, we obtain 
	\[
	\phi_{N,n}(s) = \E( e^{i s \bar X_1^n}) =  \E( e^{i  \frac sN\sum_{j=2}^N D_{1j}^n }) = \psi^{N-1}_{N,n}(s/N,\dots,s/N) \; .
	\]
Using Theorem~\ref{thm:margDiffDist} to take the limit as $n \rightarrow \infty$ implies the result.
\end{proof}
\section{Large population limit ($N \rightarrow \infty$)}
\label{equ_Nlimit}
In this section, we investigate the limit as the population size $N$ grows to infinity. 
First, we characterise the limit of marginal distributions of $k$ inter-particle distances in Theorem~\ref{thm:limitDiffDist}, for $k>1$. Then, we investigate the limit of the single particle distributions. In Theorem~\ref{thm:limPartDist}, we identify the corresponding characteristic function. In Section~\ref{sec:moments}, we provide an algorithm to compute its associated moments.

\subsection{Limit marginal of
joint inter-particle distance distributions}
\begin{thm}[Limit marginal distribution of $k$ inter-particle distances]
\label{thm:limitDiffDist}
Let\footnote{The case $k=1$ is the content of \S\ref{S:1diff}, yielding $\psi_\infty^{1}(s)=\frac{2}{2+\sg^2s^2}$.} $k>1$.
Then, the  limit $\mu_\infty^{k}:=\lim_{N\to \infty} \mu_N^{k}$ exists as a weak limit of measures on $\R^k$, and its
 characteristic function is recursively defined by
\begin{multline}\label{eq:phiInfdlk}
\psi_\infty^{k}(s_1, \dots, s_k)=\\=\frac{2\sum_{i=1}^k\psi_\infty^{k-1}(s_1, \dots, \hat{s_i},\dots, s_k) +\sum_{i,j=1,  i\neq j}^k\psi_\infty^{k-1}(s_1, \dots, \hat{s_i}, s_j+s_i, \dots, s_k) }
{k(k+1)+\frac{\sg^2}{2}(s_1^2+\dots+s_k^2 + (s_1+\dots + s_k)^2) },
\end{multline}
where \Change{$(s_1, \dots, s_k)\in \mathbb R^k$,}
$(s_1, \dots, \hat{s_i},\dots, s_k)$ denotes the $k-1$ tuple obtained  by removing the $i^{th}$ entry from $(s_1, \dots, s_k)$, and
$(s_1, \dots, \hat{s_i}, s_j+s_i, \dots, s_k)$ is the $k-1$ tuple
 $(s_1, \dots, \hat{s_i},\dots, s_k)$, with the entry corresponding to $s_j$ replaced by $s_j+s_i$.
In particular, $\psi_\infty^{k}$ is $C^\infty$.
\end{thm}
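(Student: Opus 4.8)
The plan is to mirror the proof of Theorem~\ref{thm:margDiffDist}, but now passing to the $N\to\infty$ limit rather than the $n\to\infty$ limit, and to proceed by induction on $k$. The base case $k=1$ is already handled in Corollary~\ref{cor:Laplace}, which gives $\psi_\infty^1(s)=\frac{2}{2+\sigma^2s^2}$, and one checks this agrees with \eqref{eq:phiInfdlk} read at $k=1$ (the sum $\sum_{i\neq j}$ being empty and $\psi_\infty^0\equiv1$). For the inductive step, fix $k>1$, assume $\psi_\infty^{k-1}$ exists as a pointwise limit of $\psi_N^{k-1}$ and is continuous (indeed $C^\infty$ by the induction hypothesis). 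Then start from the closed-form expression \eqref{eq:phiNdlk}--\eqref{eq:chiNdlk} for $\psi_N^k$, and take $N\to\infty$ termwise.

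The key computation is the asymptotics of the building blocks as $N\to\infty$. Each factor $\charD(t/\sqrt N)$ with $t$ a fixed linear combination of the $s_j$'s satisfies $\charD(t/\sqrt N)=1-\frac{\sigma^2 t^2}{2N}+o(1/N)$, exactly as in the proof of Corollary~\ref{cor:Laplace}. So in the numerator $\chi_N^k$, each $\charD(\tfrac{s_i}{\sqrt N})\to1$ and $\charD(\tfrac{\sum s_j}{\sqrt N})\to1$, and by the induction hypothesis $\psi_N^{k-1}(\cdots)\to\psi_\infty^{k-1}(\cdots)$ at each of the shifted arguments; hence $\chi_N^k(s_1,\dots,s_k)\to 2\sum_{i=1}^k\psi_\infty^{k-1}(s_1,\dots,\hat{s_i},\dots,s_k)+\sum_{i\neq j}\psi_\infty^{k-1}(s_1,\dots,\hat{s_i},s_j+s_i,\dots,s_k)$, which is the numerator of \eqref{eq:phiInfdlk}. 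For the denominator, write $(k+1)(N-1)+(k+1-N)\bigl(\charD(\tfrac{\sum s_j}{\sqrt N})+\sum_j\charD(\tfrac{s_j}{\sqrt N})\bigr)$ and substitute the expansion: the leading terms are $(k+1)(N-1)+(k+1-N)(k+1)=(k+1)(N-1)-(N-k-1)(k+1)=k(k+1)$ (the $O(N)$ parts cancel), and the $O(1)$ correction coming from $(k+1-N)\cdot(-\tfrac{1}{2N})(\sum s_j^2+(\sum s_j)^2)\,$ tends to $\tfrac{\sigma^2}{2}\bigl(s_1^2+\dots+s_k^2+(s_1+\dots+s_k)^2\bigr)$ since $(k+1-N)/N\to-1$. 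This yields exactly the denominator of \eqref{eq:phiInfdlk}.

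Having established pointwise convergence $\psi_N^k\to\psi_\infty^k$, the remaining points are standard: the limit function $\psi_\infty^k$ is continuous at the origin (the denominator is a nonvanishing polynomial, equal to $k(k+1)>0$ at $s=0$, and the numerator is continuous by the induction hypothesis), so L\'evy's continuity theorem gives that $\mu_N^k$ converges weakly to a measure $\mu_\infty^k$ with characteristic function $\psi_\infty^k$. Finally, $\psi_\infty^k$ is $C^\infty$: by induction $\psi_\infty^{k-1}$ is $C^\infty$, so the numerator of \eqref{eq:phiInfdlk} is $C^\infty$, and the denominator is a polynomial in $(s_1,\dots,s_k)$ that is bounded below by $k(k+1)>0$, hence the quotient is $C^\infty$.

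The main obstacle is bookkeeping rather than conceptual: one must verify carefully that the $O(N)$ terms in the denominator cancel and that the surviving $O(1)$ term has precisely the stated coefficient $\tfrac{\sigma^2}{2}$ in front of $s_1^2+\dots+s_k^2+(s_1+\dots+s_k)^2$, keeping track of the $o(1/N)$ error terms multiplied by the $O(N)$ prefactor $(k+1-N)$ (which is why one writes $\charD(t/\sqrt N)=1-\tfrac{\sigma^2t^2}{2N}+o(1/N)$ and notes $(k+1-N)\cdot o(1/N)=o(1)$). A secondary point worth a sentence is that the induction hypothesis must be invoked at all the perturbed argument tuples $(s_1,\dots,\hat{s_i},\dots,s_k)$ and $(s_1,\dots,\hat{s_i},s_j+s_i,\dots,s_k)$ simultaneously, which is legitimate since there are only finitely many of them and pointwise convergence holds at every point of $\mathbb R^{k-1}$.
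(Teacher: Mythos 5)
Your proposal is correct and follows essentially the same route as the paper: expand $\charD(t/\sqrt N)=1-\tfrac{\sigma^2 t^2}{2N}+o(1/N)$, verify cancellation of the $O(N)$ terms in the denominator of \eqref{eq:phiNdlk} so that $k(k+1)$ plus the quadratic correction survives, pass to the limit in the numerator using the inductive hypothesis, and invoke L\'evy's continuity theorem; the $C^\infty$ claim then follows by induction since the denominator is a polynomial bounded below by $k(k+1)>0$. The paper phrases the pointwise convergence slightly differently -- as convergence of a finite composition of converging maps built over $k$ recursion levels -- but this amounts to the same inductive argument you give, and if anything your write-up is a bit more explicit about the bookkeeping of the $o(1/N)$ terms multiplied by the $O(N)$ prefactor.
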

\begin{proof}
First, notice that  for every $s\in\R$, $\charD (s/\sqrt{N})=1-\tfrac{\sg^2s^2}{2N}+o(\tfrac{s^2}{N})$, as $N\to\infty$.
We will show that the numerator and denominator in  \eqref{eq:phiNdlk} converge to numerator and denominator in  \eqref{eq:phiInfdlk}, as $N\to\infty$. 

Assuming by induction the existence of $\psi_\infty^{k-1}$,
the numerator in  \eqref{eq:phiInfdlk} coincides with $\lim_{N\to\infty} \chi_N^{k}(s_1, \dots, s_k)$, where $\chi_N^{k}$ is defined in Theorem~\ref{thm:margDiffDist}. 
The denominator in \eqref{eq:phiNdlk} is given by
\begin{align*}
&(k+1)(N-1) + (k+1-N) \lt( k+1-  \frac{\sg^2}{2N} \lt( \lt(\sum_{j=1}^k s_j\rt)^2 +\sum_{j=1}^k s_j^2\rt)+o\lt(\frac{\sum_{j=1}^k s_j^2}{N}\rt)\rt)\\
&=k(k+1)-\frac{(k+1-N)\sg^2}{2N}\lt( \lt(\sum_{j=1}^k s_j\rt)^2 + \sum_{j=1}^k s_j^2\rt) + o \lt(\sum_{j=1}^k s_j^2 \rt),
\end{align*}
as $N\to\infty$. Taking the limit as $N\to\infty$ yields the denominator in \eqref{eq:phiInfdlk}, and completes the proof of \eqref{eq:phiInfdlk}. 

Note that for $k <N$, the computations of $\psi^k_\infty(s_1, \dots, s_k)$ through \eqref{eq:phiInfdlk} and $\psi^k_N(s_1, \dots, s_k)$  through \eqref{eq:phiNdlk} involve $k$ recursion steps which on each level branch out $k(k+1)$ times --- thus through an algorithm which involves summation and composition of a finite number of mappings, all of which converge as $\change{N} \rightarrow \infty$. This implies the pointwise convergence $\psi^k_N \rightarrow \psi^k_\infty$ and, \Change{by L\'evy's continuity theorem}, the weak convergence $\mu_N^{k}\rightarrow \mu_\infty^{k}$.

The final claim follows by induction on $k$, starting from the fact that $\psi_\infty^{1}(s)=\frac{2}{2+\sg^2s^2}$ is $C^\infty$.
\end{proof}

\subsection{Limit particle distribution}
Now, our goal is to pass to the limit $N \rightarrow \infty$ in the reconstruction of the single particle distribution from the inter-particle distances in Corollary~\ref{cor_phiN}. 
 In Theorem~\ref{thm:limPartDist}, we show this limit exists. In Algorithm~\ref{alg:cap}, we show a way to compute the moments of this distribution. We illustrate the agreement with numerical simulations in Figure~\ref{fig:empirical}.

\subsubsection{Existence and characterisation of the limit}
The main result of this section is the following.
\begin{thm}[Limit particle distribution]\label{thm:limPartDist}
	The  limit $\nu_\infty:=\lim_{N\to \infty} \nu_N$ exists as a weak limit of measures on $\R$, and its
	characteristic function is given by 
	\begin{equation}
		\label{equ_phiinf}
		\phi_\infty(s)=\lim_{N\to\infty}
		\psi_N^{N-1}(s/N, \dots, s/N), \Change{\quad s\in \mathbb R,}
	\end{equation}
	where $\psi_N^{N-1}$ is as in Theorem~\ref{thm:margDiffDist}.
\end{thm}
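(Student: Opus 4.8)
The plan is to show that the function $\phi_\infty(s):=\lim_{N\to\infty}\psi_N^{N-1}(s/N,\dots,s/N)$ is well defined, is continuous at $s=0$, and is the characteristic function of a probability measure $\nu_\infty$ to which $\nu_N$ converges weakly; the last point will follow from L\'evy's continuity theorem once the limit and its continuity are established. The central difficulty is that, unlike the situation in Theorem~\ref{thm:limitDiffDist} where $k$ is held fixed as $N\to\infty$, here the number of arguments $k=N-1$ grows with $N$, and each argument $s/N\to 0$; so one cannot simply invoke the recursion \eqref{eq:phiInfdlk} for a fixed $k$. The argument must control the full recursion \eqref{eq:phiNdlk}--\eqref{eq:chiNdlk} uniformly in the number of levels.

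The first step is to set up a tractable bookkeeping for $\psi_{N,n}^{N-1}(s/N,\dots,s/N)$. Because all arguments are equal to $s/N$, the recursion \eqref{eq:phiNdlk}--\eqref{eq:chiNdlk} collapses: by the permutation symmetry of $\psi_N^{k}$ in its arguments (noted after \eqref{eq:RecNun} and in Section~\ref{particle distances}), $\psi_N^{k}(s/N,\dots,s/N)$ depends, at each recursion level, only on how many of the partial sums appearing as arguments are "simple" (equal to a multiple of $s/N$) — more precisely the tuples that appear in \eqref{eq:chiNdlk} when started from the constant tuple $(s/N,\dots,s/N)$ are always of the form $(as/N, s/N, \dots, s/N)$ with a single possibly-larger first entry $a\in\{1,2\}$ and the rest equal to $s/N$. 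So I would introduce the two-parameter family $g_N^{k}(a):=\psi_N^{k}\bigl(as/N, s/N,\dots,s/N\bigr)$ (with $k-1$ trailing entries $s/N$), write the recursion \eqref{eq:phiNdlk} as a relation expressing $g_N^{k}(a)$ in terms of $g_N^{k-1}(1)$ and $g_N^{k-1}(2)$ and $\xi(\cdot/\sqrt N)$ evaluated at the relevant multiples of $s/N$, and keep track of the combinatorial coefficients (there are roughly $k$ terms of the first type in \eqref{eq:chiNdlk} and $k(k-1)$ of the second type, with $\chi$ normalised by the denominator of \eqref{eq:phiNdlk}, of size $\sim (k+1)(N-1)$).

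The second step is the asymptotic analysis of this reduced recursion as $N\to\infty$. Using $\xi(t/\sqrt N)=1-\tfrac{\sg^2 t^2}{2N}+o(t^2/N)$ for the arguments $t=as/N$ — which are themselves $O(1/N)$, so in fact $\xi(as/(N\sqrt N))=1-\tfrac{\sg^2 a^2 s^2}{2N^3}+o(N^{-3})$, i.e. each individual characteristic-function factor is $1+O(N^{-3})$ — one sees the numerator $\chi_N^{N-1}$ is, to leading order, $2\sum(\text{lower-order }g\text{'s})+\sum(\text{lower-order }g\text{'s})$ as in \eqref{eq:phiInfdlk}, but now with a huge number of terms, and the denominator is $\sim (k+1)(N-1)$ minus a correction. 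I would recast the recursion after multiplying through by suitable normalising factors so that it becomes a stable (contractive, as in the proof of Theorem~\ref{thm:margDiffDist}) fixed-point-type iteration whose coefficients converge as $N\to\infty$; the goal is a closed recurrence or generating-function identity for $\phi_\infty(s)$, or at least a monotone/bounded-sequence argument showing $g_N^{N-1}(1)$ converges. A clean alternative, which I would pursue in parallel, is to exploit that $\psi_N^{N-1}(s/N,\dots,s/N)$ is literally the characteristic function $\phi_{N,n}$ of $\bar X_1^n$ in the limit $n\to\infty$ (Corollary~\ref{cor_phiN}), and to derive directly from the dynamics \eqref{eq:process} a one-step recursion for $\phi_{N,n}$ in terms of $\phi_{N,n}$ and the joint characteristic function of $(\bar X_1^n,\bar X_{j}^n)$; passing $n\to\infty$ then $N\to\infty$ may be more transparent than iterating \eqref{eq:phiNdlk} $N-1$ times.

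The main obstacle, and the step I expect to require the most care, is the uniform control of the $N-1$-fold iteration: showing that the accumulated error from replacing each $\xi(\cdot/\sqrt N)$ factor by its quadratic Taylor expansion, summed over $\sim N^2$ branches at each of $N$ levels, still vanishes in the limit, and that the limiting recursion has a unique bounded solution continuous at $s=0$. Once convergence of $\phi_N(s):=\psi_N^{N-1}(s/N,\dots,s/N)$ to a limit $\phi_\infty(s)$ is shown for each $s$, and $\phi_\infty$ is continuous at $0$ (which should follow from $\phi_\infty(0)=1$ together with the explicit form of the limiting recursion, or from equicontinuity near $0$ of the $\phi_N$), L\'evy's continuity theorem gives a probability measure $\nu_\infty$ with characteristic function $\phi_\infty$ and $\nu_N\Rightarrow\nu_\infty$, which is exactly \eqref{equ_phiinf}.
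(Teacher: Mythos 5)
Your proposal correctly identifies the central obstacle---that $k=N-1$ grows with $N$, so the fixed-$k$ limit of Theorem~\ref{thm:limitDiffDist} cannot be invoked directly---but it does not resolve it, and the concrete steps you outline contain errors. First, the ``bookkeeping'' reduction is wrong: starting from the constant tuple $(s/N,\dots,s/N)$, after one level of \eqref{eq:chiNdlk} one does get only tuples of the form $(2s/N,s/N,\dots,s/N)$ and $(s/N,\dots,s/N)$, but already at the next level the term $\psi_N^{k-2}(s_1,\dots,\hat s_i, s_j+s_i,\dots)$ produces $(2s/N,2s/N,s/N,\dots)$ and $(3s/N,s/N,\dots)$, and iterating yields arbitrary multi-sets of multiples of $s/N$. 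There is no two-parameter family $g_N^k(a)$ with $a\in\{1,2\}$ that closes the recursion, so the reduction on which you base the asymptotics does not exist. Second, you flag ``uniform control of the $N-1$-fold iteration'' as the hard step and leave it essentially open; L\'evy's continuity theorem is only the final stitch and cannot do this work.

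The paper's proof is structurally different and supplies exactly the missing control. It splits the problem: it introduces the auxiliary function $\gm_N(s)=\psi_\infty^{N-1}(s/N,\dots,s/N)$ built from the already-known $N\to\infty$ limit kernels $\psi_\infty^{k}$, and (a) proves $\gm_N$ converges by the \emph{method of moments}, not by directly iterating the recursion: Lemma~\ref{it:p2} (pruning) and Lemma~\ref{it:p4} give a closed recursion for the derivatives $\Phi_\al=\partial_\al\psi_\infty^k(0,\dots,0)$ over integer partitions, Lemma~\ref{lem_Phibound} establishes the factorial-type bound $|\Phi_\al|\leq(\sg^2/2)^{|\al|/2}|\al|!$ needed for moment-determinacy (\cite[Thm.~30.1--30.2]{Billingsley}), and Lemma~\ref{it:p1} shows $\frac{d^j}{ds^j}\gm_N(0)\to\Phi_{\one_j}$ by counting which of the $(N-1)^j$ chain-rule terms agree with $\partial_{\one_j}$ (the rest are $o(N^j)$); then (b) it proves $|\tilde\gm_N(s)-\gm_N(s)|\to 0$ via Lemma~\ref{lem:equivConvergence}, a telescoping sup-norm estimate $\|\psi_N^k-\psi_\infty^k\|_{\infty,B_R^k}\leq\|\psi_N^1-\psi_\infty^1\|_{\infty,B_R^1}+\mathcal O(\log N/N)$ obtained by iterating \eqref{eq:DiffPhiNPhiInf}. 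Neither the method of moments nor the $\tilde\gm_N$ vs.\ $\gm_N$ comparison appears in your proposal, and these are the ideas that actually tame the $N$-fold iteration. Your suggested alternative via a one-step recursion for $\phi_{N,n}$ in terms of the joint law of $(\bar X_1^n,\bar X_j^n)$ is not developed and would still face the same proliferation problem, since that joint law is again controlled by the full hierarchy $\psi_N^k$.
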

This theorem follows directly from Theorem~\ref{it:p5}
and Lemma~\ref{lem:equivConvergence}. 
The scheme of the proof is as follows.
To investigate the limit distribution of particles \eqref{equ_phiinf}, we must
consider the limit as $N\to\infty$ of
\begin{equation*}
\tilde\gm_N(s):=\psi_N^{N-1}(s/N, \dots, s/N)\Change{, \quad s\in \mathbb R} \; .
\end{equation*} 
To do this,  we will, in Theorem~\ref{it:p5}, prove the pointwise convergence of the family of auxiliary functions
\begin{equation}
\label{def_gammaN}
\gm_N(s):=\psi_\infty^{N-1}(s/N, \dots, s/N) \; \Change{, \quad s\in \mathbb R},
\end{equation}
as $N \rightarrow \infty$, where $\psi_\infty^{N-1}$ is as in Theorem~\ref{thm:limitDiffDist}. 
This is done using the method of moments, e.g. \cite[Section 30]{Billingsley}. 
To find moments of the distributions involved, we recall that the $j^{th}$ moment of a random variable with characteristic function $\gm_N(s)$ is given by $i^{-j}  \frac{d^j \gm_N(s) }{ds^j}|_{s=0}$. 
The limits of these moments as $N\to\infty$ are computed in Lemma~\ref{it:p1}, relying on three auxiliary lemmas: Lemma~\ref{it:p2}, providing a simple relation among various partial derivatives of $\phi_\infty^k$, for different $k$;
Lemma~\ref{it:p4}, providing a recursive scheme
to evaluate partial derivatives; and
Lemma~\ref{lem_Phibound}, establishing the necessary growth bound for the method of moments to apply.

To conclude, in Lemma~\ref{lem:equivConvergence}, we will prove that  $\{\tilde\gm_N(s)\}_{N\in\N}$ converges pointwise if and only if $\{\gm_N(s)\}_{N\in\N}$ does, and in this case their limit coincide. 

We proceed by introducing some notation.
For every multi-index $\al=(\al_1, \dots, \al_j) \in \mathbb{N}_0^j$,
let $|\al |=\sum_{i=1}^j \al_i$ and $\#\al$ the number of non-zero elements of $\al$.
Also, let $\al!=\al_1!\dots\al_j!$ and $\partial_\al=\partial_{s_1}^{\al_1}\dots \partial_{s_j}^{\al_j}$, using the convention that $\partial^0_{s_i}$ is the identity for every $1\leq i <N$.  
For multi-indices $\al,\bt \in \mathbb{N}_0^j$, we write $\bt\leq \al$ if $\bt_i\leq \al_i$ for every $1\leq i\leq j$,
and in this case we define $\binom{\al}{\bt}=\frac{\al!}{\bt! (\al-\bt)!}$.

For a multi-index $\alpha \in \mathbb{N}_0^k$ we define the pruning operator $\alpha \mapsto \langle \al \rangle \in \mathbb{N}^{\# \alpha}$ which omits all zero indices from $\alpha$ and returns an ordered version of it, 
\begin{equation}
	\label{equ_pruning}
 \Change{\langle (\alpha_1, \alpha_2, \dots,  \alpha_{k}) \rangle =
	(\al_{i_1}, \al_{i_2}, \dots, \al_{i_{\# \al}})}
\end{equation}
where $(i_j)_{1 \leq j \leq k}$ is a permutation of $1,...,k$, $\al_{i_1} \geq  \al_{i_{\Change{2}}} \geq \dots \geq \al_{i_{\# \al}}>0$ and $\alpha_{i_j}=0$, for $\#\alpha < j \leq k$.
\Change{E.g., $\langle (1,0,3,1) \rangle =
	(3,1,1)$.}

In what follows, for $k \in \mathbb{N}$ and $\alpha \in \mathbb{N}^k$ let 
\begin{equation}
\label{def_Phi}
\Phi_\al:= \partial_{\al} \psi_\infty^k(0, \dots, 0) \; .    
\end{equation}

\begin{lemma} 
	\label{it:p2}
Recall from Theorem~\ref{thm:limitDiffDist} that $\psi_\infty^{k}$ is $C^\infty$ for every $k$.
We claim that for every multi-index $\al \in \mathbb{N}_0^k$ 
we have that 
$$\partial_{\al} \psi_{\infty}^{k}(0, \dots, 0) = \partial_{\langle \al \rangle} \psi_{\infty}^{\#\al}(0, \dots, 0) = \Phi_{\langle \al \rangle}\;,$$
where $\alpha \mapsto \langle \alpha \rangle$ is the pruning operator \eqref{equ_pruning}. 
\end{lemma}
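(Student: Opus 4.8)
The plan is to deduce the identity from two structural properties of the smooth functions $\psi_\infty^k$ (smoothness being guaranteed by Theorem~\ref{thm:limitDiffDist}): \emph{symmetry} under permutation of the arguments, and a \emph{restriction property}, namely that setting one argument to zero turns $\psi_\infty^k$ into $\psi_\infty^{k-1}$ in the remaining arguments. Granting these, the lemma becomes a matter of keeping track of which variables the operator $\partial_\alpha$ genuinely differentiates.

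First I would record that for every permutation $\pi$ of $\{1,\dots,k\}$ one has $\psi_\infty^k(s_1,\dots,s_k)=\psi_\infty^k(s_{\pi(1)},\dots,s_{\pi(k)})$. The cleanest justification is probabilistic: $\bar \X_N^n$ is exchangeable, hence so is $(D_{12}^n,\dots,D_{1N}^n)$ under permutations of the labels $\{2,\dots,N\}$, so the law $\mu_{N,n}^k$ of $(D_{12}^n,\dots,D_{1(k+1)}^n)$ is exchangeable; exchangeability is preserved under the weak limits $n\to\infty$ and $N\to\infty$, so $\mu_\infty^k$ is exchangeable and $\psi_\infty^k$ symmetric. (Alternatively one checks symmetry directly from \eqref{eq:phiInfdlk} by induction on $k$: the denominator is manifestly symmetric, and each of the two sums in the numerator is symmetric after re-indexing, once $\psi_\infty^{k-1}$ is known to be symmetric.)

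Next I would establish the restriction property $\psi_\infty^k(s_1,\dots,s_{k-1},0)=\psi_\infty^{k-1}(s_1,\dots,s_{k-1})$; by symmetry, the same holds when any single argument is set to $0$. Evaluating a characteristic function at $0$ in one coordinate gives the characteristic function of the corresponding marginal, and the marginal of $(D_{12}^n,\dots,D_{1(k+1)}^n)$ obtained by dropping the last coordinate is $(D_{12}^n,\dots,D_{1k}^n)$; hence $\psi_{N,n}^k(s_1,\dots,s_{k-1},0)=\psi_{N,n}^{k-1}(s_1,\dots,s_{k-1})$, and letting $n\to\infty$ and then $N\to\infty$ (Theorems~\ref{thm:margDiffDist} and~\ref{thm:limitDiffDist}) gives the claim. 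One could instead verify this directly from \eqref{eq:phiInfdlk} by induction, using the symmetry of the previous step to collapse the terms that survive when $s_k=0$.

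Finally I would conclude as follows. Fix $\alpha\in\mathbb{N}_0^k$, let $m=\#\alpha$, and let $i_1<\dots<i_m$ be the indices with $\alpha_{i}>0$. Since $\partial_{s_i}^0$ is the identity, $\partial_\alpha$ differentiates only in $s_{i_1},\dots,s_{i_m}$, while the other $k-m$ variables are simply evaluated at $0$; as these are disjoint sets of variables, the two operations commute, so we may first set $s_j=0$ for $j\notin\{i_1,\dots,i_m\}$. Applying the restriction property $k-m$ times (and symmetry to track which variable is dropped at each stage), this restricted function equals $\psi_\infty^m(s_{i_1},\dots,s_{i_m})$, whence
\[
\partial_\alpha\psi_\infty^k(0,\dots,0)=\partial_{s_{i_1}}^{\alpha_{i_1}}\cdots\partial_{s_{i_m}}^{\alpha_{i_m}}\psi_\infty^{m}(s_{i_1},\dots,s_{i_m})\big|_{0}=\partial_{(\alpha_{i_1},\dots,\alpha_{i_m})}\psi_\infty^{\#\alpha}(0,\dots,0).
\]
Since $(\alpha_{i_1},\dots,\alpha_{i_m})$ is a permutation of the positive entries of $\alpha$, and $\langle\alpha\rangle$ is the rearrangement of those entries in non-increasing order \eqref{equ_pruning}, symmetry gives $\partial_{(\alpha_{i_1},\dots,\alpha_{i_m})}\psi_\infty^{\#\alpha}(0,\dots,0)=\partial_{\langle\alpha\rangle}\psi_\infty^{\#\alpha}(0,\dots,0)=\Phi_{\langle\alpha\rangle}$, as desired. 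There is no real obstacle here; the only points requiring care are the clean derivation of the restriction property — for which the characteristic-function interpretation is the most transparent route — and the bookkeeping in the last step, where one must make sure that "differentiate in some variables" and "evaluate the rest at $0$" really do refer to disjoint variables and hence commute.
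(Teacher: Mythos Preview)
Your proposal is correct and follows essentially the same approach as the paper: establish the restriction identity $\psi_\infty^{k}(s_1,\dots,s_j,0,\dots,0)=\psi_\infty^{j}(s_1,\dots,s_j)$ and combine it with symmetry under permutation of the arguments. The paper's proof is considerably terser---it simply asserts the restriction identity ``can be verified directly from the definition of characteristic functions'' and that symmetry finishes the job---whereas you supply explicit justifications for both ingredients (via exchangeability and marginalisation) and spell out the bookkeeping in the final step; but the underlying argument is the same.
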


\begin{proof}
First, note that for $1\leq j <k$  \Change{and $(s_1, \dots, s_{j})\in \mathbb R^j$} it holds that
\begin{equation}\label{eq:pruneDer}
\psi_\infty^{k}(s_1, \dots, s_j, 0, \dots, 0)=\psi_\infty^{j}(s_1, \dots, s_{j})\; .
\end{equation}
This claim can be verified directly from the definition of characteristic functions $\psi_\infty^k$ and $ \psi_\infty^j$.
Then, the lemma follows from
the fact that, by the symmetries of the process, the quantity on the left-hand side \Change{of} \eqref{eq:pruneDer} is invariant under permutation of the arguments. 
\end{proof}

\begin{lemma}
	\label{it:p4}
	For $k \in \mathbb{N}$ and for every strictly positive multi-index $\al \in \mathbb{N}^k$ it holds that
	\[
	k(k+1)  \Phi_\al= 
	\sum_{i \neq j} \Phi_{\al_{i \rightarrow j}} -  \sum_{\substack{\bt\leq \al\\\ |\bt|=2, \#\bt=2}} \binom{\al}{\bt} \sg^2 \Phi_{\langle{\al-\bt} \rangle} -\sum_{\substack{\bt\leq \al\\\ |\bt|=2, \#\bt=1}} \binom{\al}{\bt} 2\sg^2 \Phi_{\langle{\al-\bt} \rangle} \;,
	\]
	where $\al_{i \rightarrow j} \in \mathbb{N}^{k-1}$ is the multi-index $(\al_1, ..., \hat \al_i, ..., \al_j+\al_i,..., \al_k)$ obtained from $\alpha \in \mathbb{N}^k$  by removing index $i$ and adding it to index $j$. 
\end{lemma}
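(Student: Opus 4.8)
The plan is to clear the denominator in the recursion \eqref{eq:phiInfdlk} and differentiate at the origin. Write $\psi_\infty^{k}=N_k/D_k$, where
\[
D_k(s_1,\dots,s_k)=k(k+1)+\frac{\sg^2}{2}\Bigl(\textstyle\sum_{j=1}^{k}s_j^2+\bigl(\sum_{j=1}^{k}s_j\bigr)^2\Bigr)
\]
is a quadratic polynomial with $D_k\geq k(k+1)>0$ everywhere, and $N_k$ is the numerator in \eqref{eq:phiInfdlk}. Since $\psi_\infty^{k-1}$ and $\psi_\infty^{k}$ are $C^\infty$ by Theorem~\ref{thm:limitDiffDist} and $D_k$ never vanishes, $\psi_\infty^{k}D_k=N_k$ is an identity of smooth functions. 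I would apply the multivariate Leibniz rule to $\partial_\al(\psi_\infty^{k}D_k)$ and evaluate at $0$.

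On the left-hand side, $D_k$ has degree $2$ with vanishing linear part, so $\partial_\gm D_k(0)=0$ whenever $|\gm|=1$, while $D_k(0)=k(k+1)$, $\partial_{s_i}^2 D_k(0)=2\sg^2$, and $\partial_{s_i}\partial_{s_l}D_k(0)=\sg^2$ for $i\neq l$. Hence Leibniz collapses to
\[
\partial_\al(\psi_\infty^{k}D_k)(0)=k(k+1)\,\Phi_\al+\sum_{\substack{\bt\leq\al\\ |\bt|=2,\ \#\bt=1}}\binom{\al}{\bt}2\sg^2\,\partial_{\al-\bt}\psi_\infty^{k}(0)+\sum_{\substack{\bt\leq\al\\ |\bt|=2,\ \#\bt=2}}\binom{\al}{\bt}\sg^2\,\partial_{\al-\bt}\psi_\infty^{k}(0),
\]
using $\binom{\al}{\bt}=\binom{\al}{\al-\bt}$ and $\partial_\al\psi_\infty^{k}(0)=\Phi_\al$ (valid because $\al$ is strictly positive). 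The multi-index $\al-\bt$ may have some zero entries, but Lemma~\ref{it:p2} gives $\partial_{\al-\bt}\psi_\infty^{k}(0)=\Phi_{\langle\al-\bt\rangle}$ (with the convention that the pruned index equals $\psi_\infty^{0}\equiv1$ when it is empty), which is precisely what the pruning operator in the statement absorbs.

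On the right-hand side, strict positivity of $\al$ does the work. Each term $\psi_\infty^{k-1}(s_1,\dots,\hat{s_i},\dots,s_k)$ in the first sum of $N_k$ is independent of $s_i$, so its $\partial_\al$ vanishes (because $\al_i\geq1$); thus the entire first sum contributes $0$. In the second sum, the $(i,j)$-term depends on $s_i$ and $s_j$ only through the single argument $s_j+s_i$, so by the chain rule, differentiating $\al_i$ times in $s_i$ and $\al_j$ times in $s_j$ produces $\partial^{\al_i+\al_j}$ acting on that slot, while $\partial_{s_m}^{\al_m}$ for $m\notin\{i,j\}$ acts on the $m$-th slot; evaluating at $0$ gives $\partial_{\al_{i\to j}}\psi_\infty^{k-1}(0)=\Phi_{\al_{i\to j}}$ (the index $\al_{i\to j}$ is itself strictly positive, so no pruning is needed). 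Summing over $i\neq j$ yields $\sum_{i\neq j}\Phi_{\al_{i\to j}}$. Equating both sides of $\partial_\al(\psi_\infty^{k}D_k)(0)=\partial_\al N_k(0)$ and isolating $k(k+1)\Phi_\al$ gives the claimed recursion.

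I do not expect a genuine obstacle here; the argument is essentially bookkeeping. The step that needs the most care is the second sum of $N_k$: one must track which argument slot of $\psi_\infty^{k-1}$ each partial derivative lands on, confirm that the $s_i$- and $s_j$-derivatives coalesce into $\partial^{\al_i+\al_j}$ on the merged slot, and check (using Lemma~\ref{it:p2} and the permutation symmetry of $\psi_\infty^{k-1}$) that evaluation at $0$ reproduces $\Phi_{\al_{i\to j}}$. On the left-hand side one must keep the constants $2\sg^2$ and $\sg^2$ attached to the $\#\bt=1$ and $\#\bt=2$ regimes respectively, so that they match the coefficients in the statement.
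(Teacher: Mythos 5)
Your proposal is correct and follows essentially the same route as the paper: clear the denominator in \eqref{eq:phiInfdlk}, apply the multivariate Leibniz rule to $\psi_\infty^k \sD_\infty = \sN_\infty$ at the origin, use that $\sD_\infty$ is quadratic with no linear part to kill all but the $|\bt|\in\{0,2\}$ terms, invoke strict positivity of $\al$ to annihilate the first sum in $\sN_\infty$, and appeal to Lemma~\ref{it:p2} for the pruning. You simply spell out the chain-rule bookkeeping on the $(i,j)$-terms and the Leibniz coefficients in more detail than the paper does.
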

\begin{proof}
	Let $\sN_\infty(s_1, \dots, s_k)$ and $\sD_\infty(s_1, \dots, s_k)$ be the numerator and denominator of \eqref{eq:phiInfdlk}, respectively. 	Theorem \ref{thm:limitDiffDist} shows that 
	\begin{equation}
		\label{equ_limitrecursion}
		\psi_\infty^{k}(s_1, \dots, s_k) \sD_\infty(s_1, \dots, s_k)=\sN_\infty(s_1, \dots, s_k) \; . 
\end{equation}		
	We note that for every multi-index $\bt \in \mathbb{N}_0^k$,  due to the specific structure of the denominator $\mathcal{D}_\infty$, it holds that
	\begin{equation*}
		\partial_\bt\sD_\infty(0,\dots,0)=\begin{cases}
			k (k+1) & |\bt|=0 \; ,\\
			2 \sigma^2 & |\bt|=2\, , \; \#\bt=1\; ,\\
			 \sigma^2 & |\bt|=2\, , \; \#\bt=2 \; , \\
			0 & \text{otherwise} \; .
		\end{cases}
	\end{equation*}
	On the other hand, for the numerator $\mathcal{N}_\infty$ we find that  $\partial_\al \sN_\infty(0,\dots,0) = \sum_{i \neq j} \Phi_{\al_{i \rightarrow j}}$ since the order of all $k$ partial derivatives in $\partial_\alpha \mathcal{N}_\infty$ is non-zero, which implies that
	$$\partial_\al \lt(\sum_{i=1}^{k}\psi_\infty^{k-1}(s_1, \dots, \hat{s_i},\dots, s_{k})\rt)  \Biggr|_{\vec{s}=0}=0 \; .$$
	Using these identities when taking the derivative $\partial_\alpha$ of \eqref{equ_limitrecursion} we obtain the result by the Leibniz rule and Lemma~\ref{it:p2}.
\end{proof}

\begin{lemma}
\label{lem_Phibound}
Let $\al \in \mathbb{N}^k$ and $j=|\al|$. Then, $| \Phi_\al| \leq (\sg^2/2)^{j/2} j!=:M_j$~. In particular it holds that  $|\Phi_{\one_j}| \leq  (\sg^2/2)^{j/2} j!$, \Change{where $\one_j=(1,\dots,1) \in \mathbb{N}^j$. }.   
\end{lemma}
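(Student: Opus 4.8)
The plan is to prove the bound $|\Phi_\al| \leq (\sg^2/2)^{j/2} j! =: M_j$ with $j = |\al|$ by strong induction on $j$. The base cases are immediate: $\Phi_{()} = \psi_\infty^0 \equiv 1 = M_0$, and for $j=1$ any first-order derivative of a characteristic function at the origin is $i$ times a mean, which vanishes by centredness (indeed $\mu_\infty^k$ is symmetric, so all odd moments vanish), giving $|\Phi_\al| = 0 \leq M_1$. More generally, by the symmetry of the process, $\Phi_\al = 0$ whenever $|\al|$ is odd, so the content of the bound is only for even $j$, but carrying all $j$ through the induction is harmless.

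For the inductive step with $j = |\al| \geq 2$, I would invoke Lemma~\ref{it:p4}, which expresses $k(k+1)\Phi_\al$ as a signed combination of $\Phi$-values whose index sums are $j$ (the terms $\Phi_{\al_{i\to j}}$, since $|\al_{i\to j}| = |\al| = j$) and $j-2$ (the terms $\Phi_{\langle \al - \bt\rangle}$ with $|\bt| = 2$). The subtlety is that the right-hand side contains $\Phi$-values of the \emph{same} order $j$, so this is not a naive induction. The resolution is the combinatorial identity that the number of ordered pairs $(i,j)$ with $i \neq j$ in $\{1,\dots,k\}$ is exactly $k(k-1)$, but more to the point I would instead bound the terms as follows: by Lemma~\ref{it:p2}, each $\Phi_{\al_{i\to j}}$ equals $\Phi_{\langle \al_{i\to j}\rangle}$, a derivative-value of order $j$ in \emph{at most} $k-1$ variables, and I want to run the induction on the pair $(j, k)$ lexicographically — or, cleaner, observe that $\Phi_\al$ for $\al \in \mathbb N^k$ with $k \geq j+1$ already collapses (via Lemma~\ref{it:p2}) to something with at most $j$ nonzero indices, so without loss of generality $k \leq j$. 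Then the terms $\Phi_{\al_{i\to j}}$ involve at most $k-1 \leq j-1$ variables, and an induction on $k$ for fixed $j$ closes: the worst case is a genuinely $j$-fold mixed derivative in $j$ variables, i.e. $\Phi_{\one_j}$, which I would handle as the anchor of the inner induction.

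Concretely, for the inner induction I would sum absolute values in Lemma~\ref{it:p4}: there are $k(k-1) \leq k(k+1)$ terms $\Phi_{\al_{i\to j}}$, each bounded (by the inner-induction hypothesis, since each has $\leq k-1$ nonzero indices) by $M_j = (\sg^2/2)^{j/2} j!$; the second-order correction terms are bounded using $\binom{\al}{\bt} \leq \binom{j}{2} = \tfrac{j(j-1)}{2}$ together with $|\Phi_{\langle\al-\bt\rangle}| \leq M_{j-2} = (\sg^2/2)^{(j-2)/2}(j-2)!$ from the outer induction hypothesis, and $\sg^2 M_{j-2} = (\sg^2/2)^{j/2} \cdot 2 (j-2)!$. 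Dividing by $k(k+1)$, the first group contributes at most $\frac{k(k-1)}{k(k+1)} M_j < M_j$, leaving strictly positive slack $\frac{2}{k+1} M_j \geq \frac{2}{j+1} M_j$ to absorb the correction terms; a direct check that $j(j-1)\cdot(j-2)! \cdot (\sg^2/2)^{j/2}$, suitably weighted by the number of such $\bt$ (at most $k$ for $\#\bt=1$ and $\binom{k}{2}$ for $\#\bt=2$, all $\leq$ polynomial in $j$), divided by $k(k+1)$, is bounded by the remaining slack $\frac{2}{k+1}M_j$ — equivalently, that $j! \geq c\, j(j-1)(j-2)! = c\, j!$ for an appropriate absolute constant $c < 1$ coming from the coefficient bookkeeping — completes the step. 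The main obstacle is precisely this bookkeeping: making sure the count of second-order correction terms times their size is genuinely dominated by the $\frac{2}{k+1}M_j$ slack rather than merely comparable to it, which forces the careful reduction to $k \leq j$ and the lexicographic/nested induction structure rather than a one-line estimate. The final sentence, $|\Phi_{\one_j}| \leq (\sg^2/2)^{j/2} j!$, is then just the special case $\al = \one_j$.
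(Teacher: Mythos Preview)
Your overall strategy---strong induction on $j$ via Lemma~\ref{it:p4}, with an inner induction on $k=\#\al$ exploiting that $\al_{i\to j}$ has only $k-1$ parts---is sound and is essentially the paper's approach. The paper organises it slightly differently: instead of an inner induction on $k$, it sets $M_j=\max_{|\al|=j}|\Phi_\al|$ and evaluates the inequality from Lemma~\ref{it:p4} directly at a maximiser, so that the same-order terms $\Phi_{\al_{i\to j}}$ on the right are bounded by $M_j$ itself and the inequality collapses to $M_j\le\tfrac{\sg^2}{2}j(j-1)M_{j-2}$. The $k=1$ case $\al=(j)$ is handled separately via the explicit Laplace moments. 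Either organisation works.

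There is, however, a genuine gap in your correction-term bookkeeping. You bound each $\binom{\al}{\bt}$ individually by $\binom{j}{2}$ and then multiply by the number of admissible $\bt$ (roughly $k+\binom{k}{2}$). That overcount is too crude: it yields a correction of order $k^2 M_j$ after using the outer hypothesis, and dividing by $k(k+1)$ leaves a term comparable to $M_j$, not $\tfrac{2}{k+1}M_j$, so it does \emph{not} fit in your slack for $k\ge 2$. The fix---and this is precisely what the paper uses---is the multinomial identity
\[
\sum_{\substack{\bt\le\al\\ |\bt|=2}}\binom{\al}{\bt}=\binom{|\al|}{2}=\binom{j}{2},
\]
summing rather than bounding termwise. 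With this, the total correction is at most $2\sg^2\binom{j}{2}M_{j-2}=2M_j$, and after dividing by $k(k+1)$ you get $\tfrac{2}{k(k+1)}M_j\le\tfrac{2}{k+1}M_j$, exactly filling the slack. Once corrected, your inner induction closes for every $k\ge 1$ (with equality at $k=1$). Incidentally, the ``anchor'' of the inner induction should be that base case $k=1$, i.e.\ $\al=(j)$, not $\one_j$; the latter is the final step, not the starting point.
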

\begin{proof}
We proceed by induction on $j$. The result holds true for $M_0=1$. Consider an integer $j\geq 1$  and let ${\bt_j}=(j)$.
Then, $|\Phi_{\bt_j}|= |\partial_{s}^j \psi_\infty^{1}(s)|_{s=0}$, by  ~\ref{it:p2}.
By Corollary~\ref{cor:Laplace} and properties of characteristic functions, these derivatives are given by the moments of the Laplace distribution with characteristic function $\frac{2}{2+s^2\sigma^2}$. Thus, $|\Phi_{\bt_j}|=0$ if $j$ is odd, and $|\Phi_{\bt_j}|=\frac{j!\sg^j}{2^{j/2}}$ if $j$ is even. Particularly it holds that
$|\Phi_{\beta_j}| = j(j-1) \sigma^2/2 \, |\Phi_{\beta_{j-2}}|$, which can also be written as 
$$j(j+1) |\Phi_{\beta_j}| = j(j-1) |\Phi_{\beta_j}|+ j\binom{j}{2} 2\sigma^2 \, |\Phi_{\beta_{j-2}}| \; .$$
Now, for each $j\geq 0$, let $M_j=\max_{|\al|=j} |\Phi_\al|$. Lemma~\ref{it:p4} and the previous paragraph ensure that for each $j\geq 2$ and for any multindex $\alpha_j$ with $|\alpha_j|=j$ and $\#\alpha_j>1$ it holds that
$$j (j+1) |\Phi_{\alpha_j}| \leq j (j-1) M_j + \binom{j}{2}2 \sg^2 M_{j-2} \; , $$
where we have used the multi-binomial theorem to get $\sum_{\substack{\bt\leq \al\\\ |\bt|=2}} \binom{\al}{\bt} = \binom{j}{2}$. For $M_j$ we obtain that 		
$$j (j+1) M_j \leq j (j-1) M_j + j \binom{j}{2}2 \sg^2 M_{j-2} \; . $$		

Thus $ M_j \leq  \frac{\sg^2}{2}j(j-1)  M_{j-2}\leq  \frac{\sg^2}{2}j(j-1) \frac{\sg^2}{2}(j-2)(j-3)  M_{j-4} \leq \dots$. The recursion implies that $M_j=0$ if $j$ is odd and otherwise $M_j \leq (\sg^2/2)^{j/2}  j!$.
\end{proof}

\begin{lemma}
 \label{it:p1}
\mysout{Let $\one_j=(1,\dots,1) \in \mathbb{N}^j$. Then,}
\Change{For every $j\in \mathbb N$, the following holds,}
$$\lim_{N\to\infty}\frac{d^j}{ds^j}\gm_N(0) = \Phi_{\one_{j}} 
\; .$$ 
\end{lemma}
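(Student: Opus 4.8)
The plan is to expand $\dfrac{d^j}{ds^j}\gm_N(0)$ explicitly, using that $\gm_N(s)=\psi_\infty^{N-1}(s/N,\dots,s/N)$ is the composition of the $C^\infty$ function $\psi_\infty^{N-1}$ (Theorem~\ref{thm:limitDiffDist}) with the linear diagonal map $s\mapsto(s/N,\dots,s/N)$, then to reduce all resulting partial derivatives to the fixed family of numbers $\Phi_\pi$ via Lemma~\ref{it:p2}, and finally to let $N\to\infty$ and identify which contributions survive.

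Concretely, I would first apply the multivariate chain rule in its simple diagonal form -- each inner derivative contributing a factor $1/N$ -- to obtain, for $N$ large enough that $N-1\geq j$,
\[
\frac{d^j}{ds^j}\gm_N(0)=\frac1{N^j}\sum_{\substack{\al\in\mathbb{N}_0^{N-1}\\ |\al|=j}}\frac{j!}{\al!}\,\partial_\al\psi_\infty^{N-1}(0,\dots,0),
\]
the coefficient $j!/\al!$ being the multinomial coefficient counting the orderings that give rise to $\partial_\al$. By Lemma~\ref{it:p2}, $\partial_\al\psi_\infty^{N-1}(0,\dots,0)=\Phi_{\langle\al\rangle}$, which depends on $\al$ only through its \emph{shape}, i.e. the partition $\pi=\langle\al\rangle$ of $j$; the coefficient $j!/\al!$ likewise depends only on $\pi$. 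I would therefore group the sum by $\pi$: if $\pi$ has $\#\pi$ positive parts, the number of $\al\in\mathbb{N}_0^{N-1}$ with $\langle\al\rangle=\pi$ (choose the support among the $N-1$ coordinates, then arrange the multiset of parts) is a polynomial $P_\pi(N)$ in $N$ of degree exactly $\#\pi$, so that
\[
\frac{d^j}{ds^j}\gm_N(0)=\frac1{N^j}\sum_{\pi\vdash j}c_\pi\,\Phi_\pi\,P_\pi(N),\qquad \deg P_\pi=\#\pi,
\]
for explicit constants $c_\pi=j!/\al!$ (with $\al$ any representative of $\pi$).

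To conclude, I would observe that every partition $\pi\vdash j$ satisfies $\#\pi\leq j$, with equality precisely for $\pi=\one_j$ (all parts equal $1$). Hence each term with $\pi\neq\one_j$ is $O(N^{\#\pi-j})=O(1/N)$ and vanishes in the limit -- here the finitely many numbers $\Phi_\pi$ are finite, being derivatives of $\psi_\infty^{\#\pi}$ at the origin, and one may alternatively invoke the bound of Lemma~\ref{lem_Phibound} -- while for $\pi=\one_j$ one checks directly that $c_{\one_j}=j!$ and $P_{\one_j}(N)=\binom{N-1}{j}$, so this term equals $j!\,\Phi_{\one_j}\binom{N-1}{j}/N^j\to\Phi_{\one_j}$. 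I expect the only genuine work to be the careful bookkeeping of the chain-rule expansion together with the exact count of multi-indices of a prescribed shape; once those are in place, the limit is just a comparison of polynomial degrees in $N$, and there is no real obstacle.
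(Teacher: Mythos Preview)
Your proposal is correct and follows essentially the same approach as the paper: both expand $\gm_N^{(j)}(0)$ via the chain rule as $N^{-j}$ times a sum of partial derivatives of $\psi_\infty^{N-1}$ at the origin, use Lemma~\ref{it:p2} to identify each such derivative with some $\Phi_\pi$, and then observe that only the terms with all indices distinct (partition $\one_j$) contribute in the limit by a polynomial-degree count in $N$. Your grouping by partition shape is slightly more explicit than the paper's direct counting of ordered index tuples, and your remark that the finitely many $\Phi_\pi$ with $|\pi|=j$ are automatically bounded makes the appeal to Lemma~\ref{lem_Phibound} optional here, whereas the paper invokes it; but these are presentational differences rather than substantive ones.
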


\begin{proof}
The chain rule yields
$$\frac{d^j}{ds^j}\gm_N(s) = N^{-j}  \sum_{(\ell_1, \dots, \ell_j) \in (1,..., N-1)^j} \partial_{s_{\ell_1}}\dots \partial_{s_{\ell_j}} \psi_\infty^{N-1}(s/N, \dots, s/N).$$
The sum above has $A:= (N-1)^j$ terms, and there are $A_1:=(N-1)(N-2)\dots(N-j)$ of them which agree with $\partial_{\one_j} \psi_\infty^{N-1}(s/N,\dots,s/N)$. 
This holds because partial derivatives of $\psi_\infty^{N-1}$ commute as it is build from smooth functions through a recursive scheme according to Theorem~\ref{thm:margDiffDist}.
As a consequence, there are at most $A_2=A-A_1$ terms which do not agree with this term. 
Furthermore, Lemmas~\ref{it:p2} and \ref{lem_Phibound} ensure there is a bound $M_j$ on all derivatives of order $j$,
$\partial_{s_{\ell_1}}\dots \partial_{s_{\ell_j}} \psi_\infty^{N-1}(0, \dots, 0)$,
which is independent of $N$.
Since $A_2$ is a polynomial in $N$ of order $j-1$ it holds that $A_2=o(N^j)$ and these terms do not contribute to $\frac{d^j}{ds^j}\gm_N(s)$ in the limit $N \rightarrow \infty$. 

Thus, using Lemma~\ref{it:p2} we get
\begin{equation}
   \lim_{N \rightarrow \infty} \frac{d^j}{ds^j} \gamma_N(0)=\lim_{N \rightarrow \infty} \partial_{\one_{j}} \psi_\infty^{N-1}(0, \dots, 0)=\partial_{\one_{j}} \psi_\infty^{j}(0, \dots, 0) =  \Phi_{\one_{j}} \; .
\end{equation}
\end{proof}

\begin{thm}
	\label{it:p5}
	The sequence of functions
	$\{\gm_N \}_{N\in\N}$ defined in \eqref{def_gammaN} converges pointwise for every $s \in \mathbb{R}$ as $N \rightarrow \infty$.
\end{thm}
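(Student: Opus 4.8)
The plan is to establish pointwise convergence of $\gm_N(s) = \psi_\infty^{N-1}(s/N,\dots,s/N)$ via the method of moments, exactly as outlined in the paragraph preceding Lemma~\ref{it:p1}. Each $\gm_N$ is the characteristic function of a genuine probability measure on $\R$ (namely the pushforward of $\mu_\infty^{N-1}$ under the map $(y_1,\dots,y_{N-1})\mapsto \frac1N\sum_j y_j$), so the family $\{\gm_N\}$ corresponds to a sequence of probability measures, say $\{\theta_N\}$. The strategy is to show that all moments of $\theta_N$ converge as $N\to\infty$, that the limiting moment sequence $\{\Phi_{\one_j}\}_{j\geq0}$ satisfies a growth bound guaranteeing it is the moment sequence of a unique probability measure $\theta_\infty$, and then to invoke the method-of-moments theorem (e.g. \cite[Theorem 30.2]{Billingsley}) to conclude $\theta_N \Rightarrow \theta_\infty$; by L\'evy's continuity theorem this is equivalent to pointwise convergence of $\gm_N$ to the characteristic function of $\theta_\infty$.

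Concretely, the steps in order are: First, recall from Lemma~\ref{it:p1} that for each fixed $j\in\N$, $\lim_{N\to\infty} i^{-j}\frac{d^j}{ds^j}\gm_N(0) = i^{-j}\Phi_{\one_j}$, so the $j^{th}$ moment of $\theta_N$ converges to $m_j := i^{-j}\Phi_{\one_j}$. (One should note odd moments vanish since $\Phi_{\one_j}=0$ for odd $j$ by Lemma~\ref{lem_Phibound}, consistent with the symmetry of the process.) Second, invoke Lemma~\ref{lem_Phibound} to get $|m_j| = |\Phi_{\one_j}| \leq (\sg^2/2)^{j/2} j! =: M_j$. Third, verify the Carleman-type condition ensuring the moment problem for $\{M_j\}$ is determinate: since $M_{2j} = (\sg^2/2)^j (2j)!$ grows like $(2j)!$, one has $\sum_j M_{2j}^{-1/2j} = \sum_j \big((\sg^2/2)^j(2j)!\big)^{-1/2j} \gtrsim \sum_j \frac{c}{j} = \infty$ (using $(2j)!^{1/2j} = \Theta(j)$), so Carleman's condition holds and the limiting moments determine a unique probability law $\theta_\infty$. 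Fourth, apply the method of moments to obtain $\theta_N \Rightarrow \theta_\infty$, and translate this via L\'evy's theorem into the desired pointwise convergence of $\gm_N$.

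The main obstacle is making rigorous the claim that ``all moments of $\theta_N$ converge''; the subtlety is that the method-of-moments theorem requires convergence of each moment \emph{and} that the prospective limit be a determinate moment sequence — both of which are supplied by Lemmas~\ref{it:p1} and \ref{lem_Phibound} above, so the work here is largely bookkeeping. A secondary point requiring a word of care is the uniform-in-$N$ bound $M_j$ on the order-$j$ derivatives $\partial_{s_{\ell_1}}\cdots\partial_{s_{\ell_j}}\psi_\infty^{N-1}(0,\dots,0)$: this is exactly what Lemmas~\ref{it:p2} and \ref{lem_Phibound} give, since each such derivative equals $\Phi_{\langle\al\rangle}$ for the appropriate multi-index $\al$ of weight $j$, hence is bounded by $M_{j}$ independently of $N$. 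With these pieces assembled, Carleman's condition closes the argument; I do not anticipate needing anything beyond the lemmas already proved together with standard facts about the moment problem.
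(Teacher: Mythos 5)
Your proposal is correct and follows essentially the same route as the paper: interpret $\gm_N$ as the characteristic function of a probability measure, use Lemma~\ref{it:p1} for moment convergence and Lemma~\ref{lem_Phibound} for the growth bound, verify that the limiting moment sequence is determinate, and invoke the method of moments plus L\'evy's continuity theorem. The only deviation is the determinacy criterion: you verify Carleman's condition, while the paper checks that the power series $\sum_j \Phi_{\one_j} x^j/j!$ has positive radius of convergence and cites \cite[Theorem 30.1]{Billingsley}. Both are valid sufficient conditions, but note that \cite[Theorem 30.2]{Billingsley} (which you cite) is stated with determinacy in the sense of Theorem~30.1 as a hypothesis; if you rely on Carleman instead you should either cite a Carleman-based formulation of the method of moments, or simply observe that the bound $|m_j|\leq(\sg^2/2)^{j/2}j!$ gives the power series condition even more directly, since $\sum_j |m_j| r^j/j! \leq \sum_j (\sg r/\sqrt{2})^j < \infty$ for $r < \sqrt{2}/\sg$.
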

\begin{proof}
We interpret $\gamma_N$ as characteristic function. Its associated distribution has moments given by $i^{-j} \frac{d^j}{ds^j}\gamma_N(0)$. 
Lemma~\ref{it:p1} shows that
$\lim_{N \rightarrow \infty} \frac{d^j}{ds^j} \gamma_N(0) =  \Phi_{\one_{j}}$.
Since the power series $\sum_{j=0}^\infty \Phi_{\one_j} x^j/j!$ has positive radius of convergence, due to the estimate in Lemma~\ref{lem_Phibound}, the distribution with moments $\Phi_{\one_{j}}$ is unique, according to \cite[Theorem 30.1]{Billingsley}.
As a consequence, 
the distributions associated to $\gamma_N$ converge \mysout{in distribution} \Change{weakly} to a limit $\nu$ according to \cite[Theorem 30.2]{Billingsley}. 
L\'evy's continuity theorem then implies that the characteristic function of $\nu$ is the pointwise limit  $\lim_{N\to\infty}\gm_N(s)$.
\end{proof}

To complete the proof of Theorem~\ref{thm:limPartDist}, we show the following.
\begin{lemma}\label{lem:equivConvergence}
Let $s\in \mathbb R$. Then,
	the sequence $\{\tilde\gm_N(s)\}_{N\in\N}$ converges  if and only if $\{\gm_N(s)\}_{N\in\N}$ does, and in this case their limits coincide.
\end{lemma}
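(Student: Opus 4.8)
\emph{Proof plan.} It suffices to prove the single fact that $\tilde\gm_N(s)-\gm_N(s)\to 0$ as $N\to\infty$ for every fixed $s\in\mathbb R$; since $\tilde\gm_N(s)-\gm_N(s)=\psi_N^{N-1}(s/N,\dots,s/N)-\psi_\infty^{N-1}(s/N,\dots,s/N)$, the equivalence of convergence of the two sequences, with equal limits, follows at once. Fix $R>0$ and, for $1\le k\le N-1$, set
\[
E_N^{(k)}:=\sup\bigl\{\,|\psi_N^{k}(\mathbf s)-\psi_\infty^{k}(\mathbf s)| : \mathbf s\in\mathbb R^k,\ \|\mathbf s\|_1\le R\,\bigr\}.
\]
The point that makes the comparison of \eqref{eq:phiNdlk}--\eqref{eq:chiNdlk} with \eqref{eq:phiInfdlk} possible is that the two operations on argument tuples occurring on their right-hand sides --- deleting one coordinate, and replacing a pair $(s_i,s_j)$ by the single entry $s_i+s_j$ --- do not increase the $\ell^1$-norm (the second by the triangle inequality). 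Hence each recursion expresses $\psi_N^{k}$ (resp.\ $\psi_\infty^{k}$) on the ball $\{\|\mathbf s\|_1\le R\}\subset\mathbb R^k$ in terms of $\psi_N^{k-1}$ (resp.\ $\psi_\infty^{k-1}$) evaluated on the ball of the same radius in $\mathbb R^{k-1}$, and the two may be compared termwise there.

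I would first record three estimates, uniform over $\|\mathbf s\|_1\le R$ and over $1\le k\le N-1$. Write $\chi_N^{k}$ and $\mathcal D_N^{k}$ for numerator and denominator in \eqref{eq:phiNdlk}, and $\mathcal N_\infty^{k}$, $\mathcal D_\infty^{k}$ for those in \eqref{eq:phiInfdlk}. (i) Since $|\charD|\le 1$ and $k+1-N\le0$, one has $|\mathcal D_N^{k}|\ge\operatorname{Re}\mathcal D_N^{k}\ge k(k+1)$, and trivially $\mathcal D_\infty^{k}\ge k(k+1)$. (ii) As $\Delta$ has finite variance, $\charD\in C^2$ with $\charD(0)=1$, $\charD'(0)=0$, $\charD''(0)=-\sigma^2$, so Taylor's theorem gives $\charD(t/\sqrt{N})=1-\tfrac{\sigma^2t^2}{2N}+r_N(t)$ with $\sup_{|t|\le R}|r_N(t)|=o(1/N)$. (iii) $|\psi_N^{k}|\le1$ and $|\psi_\infty^{k}|\le1$, since (by Theorems~\ref{thm:margDiffDist} and \ref{thm:limitDiffDist}) they are characteristic functions. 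Using the identity
\[
\psi_N^{k}-\psi_\infty^{k}=\frac{\chi_N^{k}-\mathcal N_\infty^{k}}{\mathcal D_N^{k}}+\psi_\infty^{k}\,\frac{\mathcal D_\infty^{k}-\mathcal D_N^{k}}{\mathcal D_N^{k}},
\]
together with (i)--(iii) and the elementary bounds $\sum_j s_j^2\le\|\mathbf s\|_1^2\le R^2$ and $|\sum_j s_j|\le R$, a direct estimate of the numerator difference (which is $\mathcal O(k/N)+k(k+1)E_N^{(k-1)}$) and of the denominator difference (which is $\mathcal O((k+1)/N)+\mathcal O(\epsilon_N)$ after using (ii)), both divided by $\mathcal D_N^{k}$ of size $\gtrsim k^2$, yields a recursion of the form
\[
E_N^{(k)}\le E_N^{(k-1)}+\frac{C\sigma^2R^2}{Nk}+\frac{R^2\,\epsilon_N}{k(k+1)},\qquad 2\le k\le N-1,
\]
for an absolute constant $C$ and some $\epsilon_N\to0$ (absorbing the $o(1/N)$ from (ii)). The decisive feature is that the $\mathcal O(k/N)$ error generated in the numerator is divided by $\mathcal D_N^{k}$, which is of order $k^2$ rather than $k$, so the per-level contribution is only $\mathcal O(1/(Nk))$.

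Telescoping this inequality from $k=2$ to $k=N-1$, and using $\sum_{k\ge2}\tfrac1{k(k+1)}=\tfrac12$ and $\sum_{k=2}^{N-1}\tfrac1k\le\ln N$, gives
\[
E_N^{(N-1)}\le E_N^{(1)}+\frac{C\sigma^2R^2\ln N}{N}+\frac{R^2\epsilon_N}{2}.
\]
For the base case, $\psi_N^{1}(s_1)=\charD(s_1/\sqrt{N})/\bigl((N-1)-(N-2)\charD(s_1/\sqrt{N})\bigr)$ by Theorem~\ref{lem:diffs}, while $\psi_\infty^{1}(s_1)=2/(2+\sigma^2s_1^2)$; substituting the expansion of $\charD(s_1/\sqrt{N})$ exactly as in the proof of Corollary~\ref{cor:Laplace} shows $\psi_N^{1}\to\psi_\infty^{1}$ uniformly on $|s_1|\le R$, i.e.\ $E_N^{(1)}\to0$. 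Hence $E_N^{(N-1)}\to0$. Choosing $R=|s|$ and noting $\|(s/N,\dots,s/N)\|_1=(N-1)|s|/N\le|s|$, we obtain $|\tilde\gm_N(s)-\gm_N(s)|\le E_N^{(N-1)}\to0$, which proves the lemma.

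The only real difficulty is the accumulation of errors over the $\sim N$ recursion levels: a crude bound $E_N^{(k)}\le E_N^{(k-1)}+\mathcal O(1/N)$ would be useless, producing $E_N^{(N-1)}=\mathcal O(1)$. Overcoming it requires exploiting the quadratic-in-$k$ growth of the denominators in \eqref{eq:phiNdlk} and \eqref{eq:phiInfdlk}, which renders the per-level error $\mathcal O(1/(Nk))$ and hence summable up to a factor $\ln N$.
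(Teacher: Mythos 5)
Your proof follows essentially the same route as the paper's: the same reduction to showing $\tilde\gamma_N(s)-\gamma_N(s)\to 0$, the same decomposition of $\psi_N^k-\psi_\infty^k$ into a numerator difference and a denominator difference each divided by $\sD_N$, the same per-level recursive inequality over the $\ell^1$-ball of radius $R$, the same telescoping via $\sum 1/k\leq\log N$ and $\sum 1/(k(k+1))<\infty$, and the same base case $\psi_N^1\to\psi_\infty^1$ taken from the proof of Corollary~\ref{cor:Laplace}. The two points you spell out explicitly --- that deleting a coordinate or merging $s_i,s_j\mapsto s_i+s_j$ cannot increase the $\ell^1$-norm, and that the quadratic-in-$k$ growth of the denominator is precisely what renders the per-level error $\mathcal O\bigl(1/(Nk)\bigr)$ and hence controllable --- are facts the paper uses but leaves implicit, so your version is a helpful clarification rather than a different argument.
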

\begin{proof}
The lemma will follow once we show that for every $s\in\mathbb R$,
$$\lim_{N\to\infty}  |\tilde\gm_N(s)-\gm_N(s)| =0.$$

We will first show the following:
For $R>0$ let $B^k_R=\{(t_1, .., t_k)\in \mathbb R^k: |t_1|+\dots+|t_k| \leq R\}$\mysout{(the ball of radius $R$ in the $l^1$-norm of $\mathbb{R}^k$)}, it holds that 
\begin{equation}\label{eq:unifConv}
\overline{\lim}_{N\to\infty} \max_{1\leq k < N} \|\psi_N^{k}-\psi_\infty^{k}\|_{\Change{\infty, B^k_R}}\leq \overline{\lim}_{N\to\infty} \|\psi_N^{1}-\psi_\infty^{1}\|_{\Change{\infty, B^1_R},}
\end{equation}
\Change{where $\|\psi\|_{\infty,S}$ denotes the supremum norm of $\psi$ restricted to $S$.}

	Let $(s_1, \dots, s_k) \in B^k_R$ and let  $\sN_\infty(s_1, \dots, s_{k})$ and $\sD_\infty(s_1, \dots, s_{k})$ be the numerator and denominator of \eqref{eq:phiInfdlk}, respectively. Similarly, let  $\sN_N(s_1, \dots, s_{k})$ and $\sD_N(s_1, \dots, s_{k})$ be the numerator and denominator of \eqref{eq:phiNdlk}, respectively. 
	We note that for every $k< N$ we have that
	$\sD_\infty, \sD_N \geq k(k+1)$ and $|\sN_N|, |\sN_\infty| \leq k(k+1)$, where the latter follows from the fact that there are $k(k+1)$ terms in the sum defining $\sN_N(s_1, \dots, s_{k})$, and each of them has absolute value at most 1. Hence,
	\begin{multline}
		\label{eq:DiffPhiNPhiInf}
		|\psi_N^{k}(s_1,\dots, s_k) -\psi_\infty^{k}(s_1,\dots, s_k)| 
		=  \lt| \frac{\sN_N-\sN_\infty}{\sD_N} + \frac{\sN_\infty   }{\sD_N \sD_\infty} (\sD_\infty-\sD_N) \rt|
		\leq \\
		\leq 	\frac{|\sN_N(s_1, \dots, s_{k})-\sN_\infty(s_1, \dots, s_{k})|}{k(k+1)} +
		\frac{\lt| \sD_N(s_1, \dots, s_{k})-\sD_\infty(s_1, \dots, s_{k}) \rt|}{k(k+1)} \; . 
	\end{multline}
	To bound the first term, observe that the corresponding numerator may be split into 
	$k(k+1)$ terms, each of the form $|\psi_N^{k-1}(t_1,\dots,t_{k-1}) -\psi_\infty^{k-1}(t_1,\dots,t_{k-1})|$ with $(t_1,\dots,t_{k-1}) \in B^{k-1}_R$,
	hence it is bounded by $\| \psi_N^{k-1} -\psi_\infty^{k-1} \|_{\Change{\infty, B_R^{k-1}}}$.
	
	To bound the second term, we argue as in the proof of Theorem~\ref{thm:limitDiffDist}. 
Recall that, for each $s\in \mathbb R$,  $\charD (s/\sqrt{N}) = 1-\tfrac{\sg^2s^2}{2N}+ o(\frac{s^2}{N})$ as $N\to\infty$.
	Furthermore, since $(s_1, \dots, s_k)\in B_R^k$, then
 $\lt(\sum_{j=1}^k s_j\rt)^2 + \sum_{j=1}^k s_j^2\leq 2 \|(s_1, \dots, s_{k})\|_1^2 \leq 2 R^2$.
	Thus, for $N>k$, the following bound holds uniformly on $B_R^k$
	\begin{align*}
		|\sD_N(s_1, \dots, s_{k})-\sD_\infty(s_1, \dots, s_{k})|
		&\leq \frac{(k+1)\sg^2}{2N}\lt( \lt(\sum_{j=1}^k s_j\rt)^2 + \sum_{j=1}^k s_j^2\rt)+ o \lt(\sum_{j=1}^k s_j^2\rt)\\
		&\leq \frac{(k+1)\sg^2}{N} R^2 + o\lt( R^2\rt) \; .
			\end{align*}
	Incorporating this into \eqref{eq:DiffPhiNPhiInf}  and taking the supremum over $B^k_R$ yields 
	\begin{align*}
		\|\psi_N^{k} -\psi_\infty^{k}\|_{\Change{\infty, B^k_R}}
		&\leq \| \psi_N^{k-1} -\psi_\infty^{k-1} \|_{\Change{\infty, B_R^{k-1}}}+\frac{1}{k} \frac{\sg^2}{N} R^2 
  +  \frac{1}{k^2} o\lt(R^2\rt)\; .	
	\end{align*}
	Iterating this argument, and recalling that $\sum_{j=2}^N \frac{1}{j}\leq \log N$ and $\sum_{j=1}^\infty \frac{1}{j^2}<\infty$, for every $k<N$ we obtain
	\begin{align*}
		\|\psi_N^{k} -\psi_\infty^{k}\|_{\Change{\infty, B^k_R}}
		&\leq \| \psi_N^{1} -\psi_\infty^{1} \|_{\Change{\infty, B_R^{1}}}+\frac{\log N }{N} \sg^2 R^2 +  o\lt( R^2\rt),
	\end{align*}
	as $N\to\infty$. Letting $N\to\infty$ yields \eqref{eq:unifConv}.

The proof of Corollory~\ref{cor:Laplace} shows that, as $N\to\infty$, we have that 
$\|\psi_N^{1}-\psi_\infty^{1}\|_{\Change{\infty, B^1_R}}=\mathcal O(R^2/N)$, so the previous paragraph implies that
$\lim_{N\to\infty}  \|\psi_N^{N-1}-\psi_\infty^{N-1}\|_{\Change{\infty, B^{N-1}_R}}=0$.
Thus, for every $s\in \mathbb R$,
$$\overline{\lim}_{N\to\infty}  |\tilde\gm_N(s)-\gm_N(s)| \leq
\lim_{N\to\infty}  \|\psi_N^{N-1}-\psi_\infty^{N-1}\|_{\Change{\infty, B^{N-1}_{|s|}}}=0,$$
and the proof is concluded.
\end{proof}

\subsubsection{Computation of moments of $\nu_\infty^{(1)}$}\label{sec:moments}
To compute the moments of $\nu_\infty^{(1)}$ we use that they are given by $i^{-j} \Phi_{\one_j}$ according to Theorem~\ref{it:p5}. The partial derivatives $\Phi_{\one_j}$ can be computed through the recursive scheme provided in Lemma~\ref{it:p4}, and the derivatives of $\psi^1$ can be directly computed using Corollary~\ref{cor:Laplace}. 

For the upcoming discussion, we recall the concept of partitions: The multi-index $\ell$ is a partition  of the natural number $n$ if it is an ordered tuple $(\ell_1, \ell_2, ..., \ell_m)$ such that $\ell_i \in \mathbb{N}$, $\ell_{i}\geq\ell_{i+1}$ for $1 < i < m$ and $\ell_1+...+\ell_m=n$. Write $|\ell|=n$ and $\#\ell=m$.

Note that the recursion Lemma~\ref{it:p4} relates $\Phi_\alpha$ to other partial derivatives $\Phi_\beta$ of higher order in the canonical ordering of partitions\cite{AbramowitzMilton1972}\footnote{\href{http://oeis.org/wiki/Orderings_of_partitions}{http://oeis.org/wiki/Orderings\_of\_partitions}}
\begin{equation}
\label{equ_ordering}
    \begin{aligned}
&\{()\}\\
&\{(1)\}\\
&\{(2), (1, 1)\}\\
&\{(3), (2, 1), (1, 1, 1)\} \\
&\{(4), (3, 1), (2, 2), (2, 1, 1), (1, 1, 1, 1)\}\\
&\vdots
 \end{aligned}
\end{equation}
It holds that either $|\beta|=|\alpha|-2$ or that $\beta$ is larger than $\alpha$ in the canonical order of partitions of the integer $|\beta|=|\alpha|$, i.e. $\beta$ is left of $\alpha$ in \eqref{equ_ordering}.

For odd-order derivatives of $\Phi_\alpha$ this means that only odd-order derivatives are used to compute them, starting with odd derivatives of $\psi^1$ at $s=0$ which all vanish. Therefore all odd-order derivatives $\Psi_\alpha$ are zero.

As for even-order derivatives we start the recursion with $\Phi_{()}:=\psi^1(0)=1$ and obtain for the partial derivatives of order $2$
\begin{align*}
   \Phi_{(2)}&=\partial_{ss} \psi^1(0)=-\sigma^2 \; , \\
   \Phi_{(11)}&= \frac{1}{6} \left(2 \times \Phi_{(2)}- \sigma^2 \Phi_{()} \right)=-\frac{1}{2} \sigma^2 \; ,
\end{align*}
which implies that the second moment of $\nu_\infty$ is given by $i^{-2} \Phi_{(11)}=\frac{\sigma^2}{2}$.
Likewise, for the partial derivatives of order $4$ we obtain 
\begin{equation}
\label{equ_Phi4}
\begin{aligned}
	\Phi_{(4)}&=\partial_{ssss} \psi^1(0)=6 \sigma^4  \; , \\
	\Phi_{(31)}&= \frac{1}{6} \left(2 \times \Phi_{(4)}- 3  \times \sigma^2 \Phi_{(2)}- 3 \times 2 \sigma^2 \Phi_{(11)} \right)=3\sigma^4 \; , \\
	\Phi_{(22)}&= \frac{1}{6} \left(2 \times \Phi_{(4)}- 4 \times  \sigma^2 \Phi_{(11)}- 2 \times 2 \sigma^2 \Phi_{(2)} \right)=3\sigma^4 \; , \\	
	\Phi_{(211)}&= \frac{1}{12} \left(4 \times \Phi_{(31)}+2 \times \Phi_{(22)}- \sigma^2 (2 \times \Phi_{(11)}+\Phi_{(2)})- 2 \sigma^2 \Phi_{(2)} \right)=\frac{11}{6} \sigma^4 \; , \\		
	\Phi_{(1111)}&= \frac{1}{20} \left(12 \times \Phi_{(211)}- 6 \times \sigma^2 \,  \Phi_{(11)} \right)=\frac{5}{4} \sigma^4 \;,			
\end{aligned}
\end{equation}
which implies that the fourth moment of $\nu_\infty$ is given by $i^{-4} \Phi_{(1111)}=\frac{5}{4} \sigma^4$. 

Note that the partial derivatives listed in \eqref{equ_Phi4} correspond to the ordered partitions of the integer $4$ listed in \eqref{equ_ordering}. This suggests Algorithm~\ref{alg:cap} to compute the $k$-th moment of the particle distribution given by $i^{-k} \Phi_{\one_k}$.

\begin{algorithm}
\caption{Compute the $k$-th moment of $\nu^1$.}
\label{alg:cap}
\begin{algorithmic}
\Require $k>0$ even
\ForAll {$n=2,4,...,k$}
\State {Generate the ordered list of partitions $P(n)=\{(n), (n-1, 1),..., \one_n \}$ }
\State $\Phi_{(n)} \gets \frac{d^n}{ds^n} \psi^1(0)$
\State $\alpha \gets (n)$
\While{$\alpha \neq \one_n$}
\State $\alpha \gets  \text{next partition in } P(n)$
\State $
  \Phi_\al \gets \frac{1}{\#\alpha (\#\alpha+1)} \left(  
\sum_{i \neq j} \Phi_{\al_{i \rightarrow j}} -  \sum_{\substack{\bt\leq \al\\\ |\bt|=2, \\\ \#\bt=2}} \binom{\al}{\bt} \sg^2 \Phi_{\langle{\al-\bt} \rangle} -\sum_{\substack{\bt\leq \al\\\ |\bt|=2, \\\ \#\bt=1}} \binom{\al}{\bt} 2\sg^2 \Phi_{\langle{\al-\bt} \rangle}  \right)$
\EndWhile
\EndFor
\State $M_k \gets i^{-k} \Phi_{\one_k}$
\State \Return $M_k$
\end{algorithmic}
\end{algorithm}

A tree can be used to store the partial derivatives $\Phi_\alpha$. Each node stores the value of a particular $\Phi_\alpha$ and its leaves are the ordered list partial derivatives with respect to $s_k$ where $k=\#\alpha+1$ \Change{and the order of derivatives with respect to $s_k$ is between $1$ and $\alpha_{k-1}$,} i.e. $\Phi_{(\alpha_1, ..., \alpha_{k-1}, 1)}$, ..., $\Phi_{(\alpha_1, ..., \alpha_{k-1}, \alpha_{k-1})}$.

The algorithm allows us to compute the moments of $\nu_\infty$ as follows,
\begin{equation}
\label{equ_moments}
\begin{aligned}
	\int_0^\infty s^2  \, d\nu_\infty(s) &=  \frac{1}{2} \sigma^2  \; , \\
	\int_0^\infty s^4 \, d\nu_\infty(s) &=  \frac{5}{4} \sigma^4  \; , \\
	\int_0^\infty s^6  \, d\nu_\infty(s) &=  \frac{215}{24} \sigma^6 \approx  8.96 \, \sigma^6 \; , \\
	\int_0^\infty s^8  \, d\nu_\infty(s) &=  \frac{102877}{720} \sigma^8 \approx 142.88 \, \sigma^8 \; ,\\
	& \vdots 
\end{aligned}
\end{equation}
which coincide well with the moments of the empirical single particle distribution which we obtained as time and particle average of a long-time simulation run (Figure~\ref{fig:empirical}),
\begin{equation}
\label{empirical_moments}
\begin{aligned}
M_1 & =0  & M_2 &= 0.492 \, \sigma^2\\
M_3 & =-0.00093 \, \sigma^3 & M_4 &= 1.233 \, \sigma^4\\
M_5 & =-0.027 \, \sigma^5 & M_6 &= 9.174\, \sigma^6\\
M_7  & =-1.262 \, \sigma^7 & M_8 &= 154.376 \, \sigma^8 \; , 
\end{aligned}
\end{equation}

Note that the computation of moments of order $k>60$ is somewhat unfeasible on a standard desktop computer due to the increase of computational cost as illustrated by the number of case distinctions in the proof of Lemma~\ref{lem_Phibound}.

\begin{figure}
	\centering
	\includegraphics[width=0.5\linewidth]{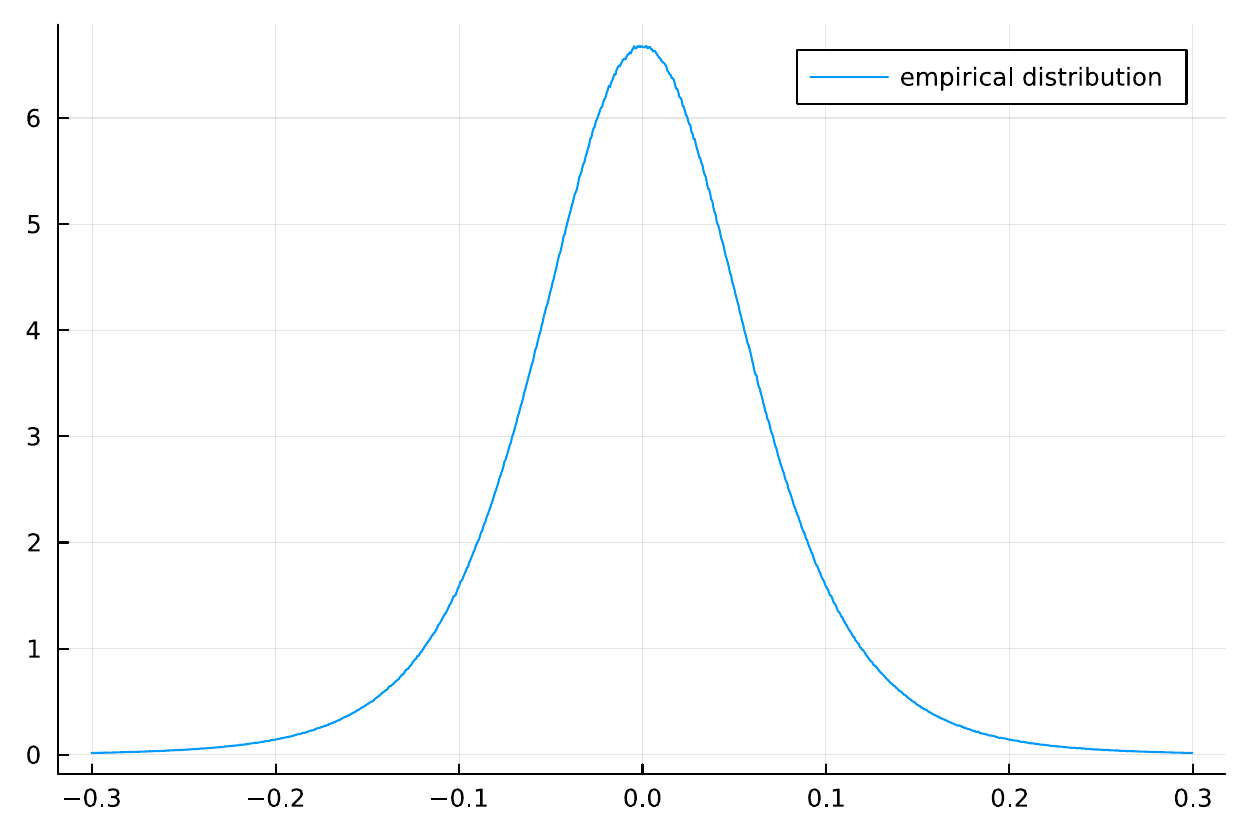}
	\caption{Long-time-average of the single-particle empirical distribution of the renormalised process $\bar X^n_1, ..., \bar X^n_N$ with $N=100$, $\sigma=0.1$ \change{and $\Delta_{n+1} \sim N(0, \sigma^2)$}. The empirical moments are listed in \eqref{empirical_moments}.}
	\label{fig:empirical}
\end{figure}

\section{Conclusion}

In this study, we characterise the asymptotic distribution of particles on the real line undergoing random repositioning into the vicinity of a randomly chosen particle. This stochastic process, defined in \eqref{equ_process1}, can be interpreted as a toy model for actin filament turnover through disassociation and branching~\cite{Gautreau2022,RAZBENAROUSH2017} as well as for opinion formation.

Numerical simulation (figure~\ref{fig:sequenceofframes}) indicates that the distribution of particles is characterised by formation, branching and random turnover and displacement of transient clusters. In this study, however, we factor out the random walk of the centre of mass and aim at characterising the asymptotic distribution (figure~\ref{fig:empirical}) of the centred process, particularly in the limit of large particle numbers, which amounts to finding the shape of the particle cluster. 

Our work investigates the process satisfied by inter-particle distances \eqref{eq:diffs12}, which serves as an equivalent, but simpler formulation of the process satisfied by the centred particle positions. Using that particle positions and their pairwise distances are interchangeable we find that asymptotically, i.e. for large times, the distribution of single inter-particle distances follows a Laplace distribution in agreement with results obtained in other contexts (\cite{KKP-Laplace}).

For the higher-order asymptotic marginal distributions, we establish a recurrence relation between the characteristic functions of the marginal distribution of $k$ inter-particle distances and of $k-1$ inter-particle distances. This fully characterises all time-asymptotic marginal distributions of particle distances in ensembles of size $N$ and opens the way for detailed analysis of the number and spatial distribution of clusters in the case of finite ensemble size.

In the large ensemble limit $N \rightarrow \infty$ this recurrence relation simplifies significantly. It also gives rise to a recurrence relation between partial derivatives of the characteristic functions of inter-particle distance distributions at zero. Using the reconstruction of single particle positions from inter-particle particle distances, we are able to compute all the moments of the asymptotic single particle distribution in the large ensemble limit, illustrated in figure~\ref{fig:empirical}.

Finally, we devise and implement an algorithm to evaluate the recursive scheme for the partial derivatives \eqref{def_Phi} and to compute moments \eqref{equ_moments} of the single-particle distribution exactly.

These results, particularly that the asymptotic single-particle distribution has variance $\sigma^2/2$ and follows a fat-tailed super-Gaussian distribution with kurtosis $5$, are of relevance in applications. For example, in cellular mechanobiology the cluster size in cytoskeleton networks can be assessed experimentally~\cite{Alvarado2013591} in order to inform mechano-chemical models on actin turnover~\cite{Tam2021}.

What has not been considered in this work is the ancestry of single particles \cite{bookDawsonGreven2014} - in the case of our replacement process established through random repositioning events into the vicinity of other particles. 
We remark that the assumption of identical distribution of initial particle positions was used to guarantee that particle distributions are exchangeable. However, we expect that for any given set of initial positions, particle distributions will be exchangeable once all particles share a common ancestor.

Many questions related to the dynamics and structure of the particle clusters are open and should be addressed in future research. This involves further analysis of the distribution of branching events by which transient clusters are shed from existing ones and other characteristics of the random walk of particle clusters, which potentially might be derived through a mean-field procedure \cite{KipnisClaude2014}. A detailed analysis of the number and distribution of transient clusters could be addressed through careful analysis of inter-particle distances and their marginal distributions. 

\Change{Finally, it would also be interesting to further investigate this interacting particle system in the context of non-equilibrium steady states (NESS) \cite{EvansMajumdarPRL2011}} \Change{and stochastic resetting \cite{NagarGupta_2023}.}

\bigskip 
\section*{Acknowledgements} The authors are grateful to Ross McVinish (UQ) for valuable discussions and bibliographic suggestions, \Change{and to the anonymous referees, whose comments and suggestions have allowed us to improve the presentation of this work}.
The authors acknowledge funding from the Australian Research Council (ARC) Discovery Program through grants DP180102956, (awarded to D. B. O.) and DP220102216 (awarded to C. G. T.).

\bibliographystyle{alpha}
\bibliography{aggregates}

\end{document}